\newtheorem{theorem}{Theorem}[section]
\newtheorem{proposition}[theorem]{Proposition}
\newtheorem{corollary}[theorem]{Corollary}
\theoremstyle{definition}
\newtheorem{definition}[theorem]{Definition}
\theoremstyle{remark}
\newtheorem{remark}[theorem]{Remark}
\numberwithin{equation}{section}
\DeclareMathAlphabet{\mathpzc}{OT1}{pzc}{m}{it}
\newcommand{\abs}[1]{\left|#1\right|}
\newcommand{\Rm}{\textup{Rm}}
\newcommand{\Ric}{\textup{Ric}}
\newcommand{\id}{\textup{id}}
\newcommand{\R}{\mathbb{R}}
\newcommand{\CP}{\mathbb{CP}}
\renewcommand{\div}{\,\textup{div}}
\newcommand{\D}[2]{\frac{\partial #1}{\partial #2}}
\begin{document}
\title[]{Uniqueness and Symmetry of Self-Similar Solutions of Curvature Flows in Warped Product Spaces}
\author{Frederick Tsz-Ho Fong}
\address{Department of Mathematics, School of Science, Hong Kong University of Science and Technology, Clear Water Bay, Hong Kong}
\email{frederick.fong@ust.hk}
\date{12 November, 2024}
\begin{abstract}
In this article, we establish some uniqueness and symmetry results of self-similar solutions to curvature flows by some homogeneous speed functions of principal curvatures in some warped product spaces. In particular, we proved that any compact star-shaped self-similar solution to any parabolic flow with homogeneous degree $-1$ (including the inverse mean curvature flow) in warped product spaces $I \times_{\phi} M^n$, where $M^n$ is a compact homogeneous manifold and $\phi'' \geq 0$, must be a slice. The same result holds for compact self-expanders when the degree of the speed function is greater than $-1$ and with an extra assumption $\phi' \geq 0$.

Furthermore, we also show that any complete non-compact star-shaped, asymptotically concial expanding self-similar solutions to the flow by positive power of mean curvature in hyperbolic and anti-deSitter-Schwarzschild spaces are rotationally symmetric.
\end{abstract}

\maketitle

\section{Introduction}
\subsection{Definitions and background}
In this article, we consider extrinsic curvature flows in warped product ambient spaces $(N^{n+1},\bar{g})$ of the form
\begin{align}
\label{eq:warp}
N^{n+1} & = I \times M^n, &\bar{g} & = d\rho^2 + \phi(\rho)^2 g_M	
\end{align}
where $I$ is an interval on the real line, $(M^n, g_M)$ is an $n$-dimensional Riemannian manifold, and $\phi(\rho)$ is a warp function such that $\phi(\rho) \geq 0$ on $I$.

When taking $I$, $M^n$, $\phi$ to be the following, we get constant curvature space forms:
\begin{itemize}
\item Euclidean space $\R^{n+1}$: $I = [0,\infty)$, $M^n = \mathbb{S}^n$, and $\phi(\rho) = \rho$.
\item Hyperbolic space $\mathbb{H}^{n+1}$: $I = [0,\infty)$, $M = \mathbb{S}^n$, and $\phi(\rho) = \sinh \rho$.
\item Sphere $\mathbb{S}^{n+1}$: $I = [0,\pi]$, $M = \mathbb{S}^n$, and $\phi(\rho) = \sin \rho$.
\end{itemize}
One can check that then $\bar{g}$ has constant sectional curvature $0,-1,1$ respectively.

There are other Riemannian manifolds of interest in the context of general relativity, such as the anti-deSitter-Schwarzschild manifold $[s_0,\infty) \times \mathbb{S}^n$ with the metric:
\[\frac{1}{1-ms^{1-n}+s^2}ds^2 + s^2 g_{\mathbb{S}^n}\]
where $g_{\mathbb{S}^n}$ is the metric of a round unit sphere. After changing variables to $\rho$ via 
\[\dfrac{d\rho}{ds} = \dfrac{1}{\sqrt{1-ms^{1-n}+s^2}},\]
it can be written in the warped product form as in \eqref{eq:warp}, with the warped product factor $\phi(\rho)$ asymptotically approaching $\sinh\rho$ as $\rho \to \infty$. Other similar examples include Reissner-Nordstrom metric, and deSitter-Schwarzschild metric.

Now back to the general setting as in \eqref{eq:warp}, we consider the vector field
\[X := \phi(\rho)\dfrac{\partial}{\partial\rho}\]
on $N^{n+1}$. When $N^{n+1} = \R^{n+1} = [0,\infty) \times \mathbb{S}^n$ with $\phi(\rho) = \rho$, the vector field $X$ is the position vector. Generally, $X$ is a conformal Killing vector field, as we have
\begin{equation}
\mathcal{L}_X \bar{g} = 2\phi'(\rho)\bar{g}.
\end{equation}
Therefore, one can regard $X$ in $N^{n+1}$ as a natural generalization of the position vector field. A hypersurface $\Sigma$ is said to be star-shaped if $\bar{g}(X,\nu) \not= 0$ on $\Sigma$. Here, $\nu$ is the Gauss map of $\Sigma$.

Let $\psi^X_t : N \to N$ be the 1-parameter family of diffeomorphisms generated by $X$, i.e.
\[\D{}{t}\psi^X_t = X \circ \psi^X_t.\]
By expressing $\psi_t^X$ as the map $\psi_t^X(\rho,w) = (r(\rho,t), w)$, then one can see that $r(t)$ is the solution to the ODE:
\begin{equation}
\label{eq:r(t)}
\D{}{t}r(\rho, t) = \phi\big(r(\rho, t)\big), \quad r(\rho, 0) = \rho.	
\end{equation}

In case $N^{n+1}$ is a constant curvature space form with sectional curvature $k = -1, 0, 1$, it can be computed that for any $(\rho, w) \in I \times M$, we have
\begin{equation}
\psi_t^X(\rho, w) =
\begin{cases}
\left(2\tanh^{-1}(e^t \tanh\frac{\rho}{2}),w\right) & \text{ if $k = -1$}\\
(e^t\rho, w) & \text{ if $k = 0$}\\
\left(2\tan^{-1}(e^t \tan\frac{\rho}{2}),w\right)	& \text{ if $k = +1$}
\end{cases}.
\end{equation}

We say an evolving hypersurface $\Sigma^n_t$ to be a self-similar solution in $(N^{n+1}, \bar{g})$ if there exists a strictly monotone time function $\tau(t)$ of $t$ such that $\Sigma_t^n = \psi_{\tau(t)}^X\left(\Sigma_0\right)$. If $\tau'(t) > 0$, we call $\Sigma_t$ an \emph{expanding self-similar solution} or (simply \emph{self-expanders}), whereas if $\tau'(t) < 0$, we call $\Sigma_t$ a \emph{shrinking self-similar solution} (or simply \emph{self-shrinkers}). If such a self-similar solution is also a solution along the flow
\begin{equation}
\label{eq:flow-perp}
\left(\D{F_t}{t}\right)^\perp = f\nu_t
\end{equation}
where $F_t$ is the embedding of $\Sigma_t$ in $N$ with the Gauss map $\nu_t$, $\perp$ denotes the projection onto the normal direction, and $f(\lambda_1, \cdots, \lambda_n)$ is a homogeneous symmetric function of principal curvatures of $\Sigma_t$, then we call $\Sigma_t$ a self-similar solution to that flow in $(N^{n+1}, \bar{g})$.

Slices of \eqref{eq:warp} are important examples of self-similar solutions. Consider a slice $\Sigma^n(\rho_0) := \{\rho_0\} \times M^n$ in $N^{n+1} = I \times M^n$, where $\rho_0 \in I$. One can parametrize $\Sigma(\rho_0)$ by
\[F(w_1, \cdots, w_n) = (\rho_0, w_1, \cdots, w_n)\]
where $w_i$'s are local coordinates of $M^n$. Then, the tangent space of $\Sigma(\rho_0)$ is spanned by 
\[\left\{\D{F}{w_i}\right\}_{i=1}^n=\left\{\D{}{w_i}\right\}_{i=1}^n.\]
Take the Gauss map $\nu$ to be the inward-pointing unit normal $\displaystyle{\nu = -\D{}{\rho}}$. This shows the principal curvatures of $\Sigma(\rho_0)$ are given by
\[\frac{\phi'(\rho_0)}{\phi(\rho_0)}, \cdots, \frac{\phi'(\rho_0)}{\phi(\rho_0)}.\]
In case of constant curvature space forms with $\textup{sect} = k$, they are given by
\[
\frac{\phi'(\rho_0)}{\phi(\rho_0)} =
\begin{cases}
\frac{\cosh \rho_0}{\sinh \rho_0} & \text{ if $k = -1$}\\
\frac{1}{r_0} & \text{ if $k = 0$}\\
\frac{\cos\rho_0}{\sin\rho_0} & \text{ if $k = +1$}	
\end{cases}.
\]
Under the flow \eqref{eq:flow-perp}, a slice $\Sigma_0 = \{\rho_0\} \times M^n$ would remain to be slices $\Sigma_t = \{\rho(t)\} \times M^n$ with $\rho(t)$ given by the solution to be the ODE:
\[\rho'(t) = -f\left(\frac{\phi'(\rho)}{\phi(\rho)}, \cdots, \frac{\phi'(\rho)}{\phi(\rho)}\right).\]
Therefore, any slice $\{\rho_0\} \times M^n$ in $N^{n+1}$ gives a self-similar solution $\Sigma_t = \{\rho(t)\} \times M^n$ along any flow of the form \eqref{eq:flow-perp} as long as principal curvatures are in the domain of $f$. In terms of the flow map $\psi_t^X$, the self-similar solution can be written as
\[\Sigma_t = \psi_{\tau(t)}^X \circ \Sigma_0\]
where $\tau(t)$ is a function satisfying $\rho(t) = r(\rho_0, \tau(t))$ where $r(\rho, t)$ is the solution to \eqref{eq:r(t)}.

\subsection{Compact self-similar solutions}
One objective of this article is to prove that under some general conditions on the warp function $\phi(\rho)$, the warp factor $M^n$, and the speed function $f$, these slices are the only compact star-shaped self-similar solutions. We also establish some rotational symmetry results for complete non-compact self-similar solutions assuming it is asymptotic to a ``cone'' in some doubly-warped product space.

Classification and uniqueness problems of self-similar solutions have been a central topic in the context of singularity analysis of geometric flows, as they are likely be limit models of the flow. When the ambient space is $\R^{n+1}$, Huisken proved in \cite{H84} that closed convex hypersurface in $\R^{n+1}$ would converge to the round sphere after rescaling under the mean curvature flow (MCF), and in \cite{H90} he proved that compact mean-convex MCF self-shrinkers in $\R^{n+1}$ must be the round sphere. Colding-Minicozzi \cite{CM12} proved that stable MCF self-shrinkers must be the round sphere. In $\R^3$, Brendle proved in \cite{B16} that any compact genus $0$ self-shrinkers in $\R^3$ must be the round $2$-sphere. When the speed function is given by some homogeneous function of positive degrees, there are some uniqueness results by McCoy \cite{McC11} in Euclidean spaces. Recently, there are some breakthroughs on the Gauss curvature (and its power) flow (GCF) by Brendle-Choi-Daskalopoulos \cite{BCD17}, proving that the compact self-shrinkers to the flow by $K^\alpha\nu$ must be either spheres or ellipsoids depending on $\alpha > 0$. There are also generalizations of similar uniqueness results of compact self-shrinkers for $\sigma_k^\alpha$-flows with $\alpha > 0$ in $\R^{n+1}$ by Gao-Li-Ma \cite{GLM18}, and to flows by high positive power of $1$-homogeneous, inverse concave speed function of principal curvatures in $\R^{n+1}$ by Gao-Li-Wang \cite{GLW22}.

For inverse curvature flows which are of expansion type, round spheres in $\R^{n+1}$ are self-expanders instead. The flow was first studied by Gerhardt \cite{G90} and Urbas \cite{U90} who independently proved that inverse curvature flow on star-shaped hypersurfaces by homogeneous speed functions of degree $-1$ in $\R^{n+1}$ converges to the round sphere after rescaling. As for classification results of self-similar solutions, Drugan-Lee-Wheeler proved in \cite{DLW16} proved that compact self-expander to the inverse mean curvature flow (IMCF) in $\R^{n+1}$ must be round spheres using Hsiung-Minkowski's identities. The same uniqueness theorem for flows by $(\sigma_k/\sigma_j)^{1/(j-k)}$ in $\R^{n+1}$, where $\sigma_k$'s are $k$-th symmetric polynomials of principal curvatures, was also proved by Kwong-Lee-Pyo in \cite{KLP18}. In the joint work \cite{CCF21} by A. Chow, K. Chow and the author, we further extend this uniqueness result to any expansion flow by speed functions of degree $-1$ in $\R^{n+1}$.

The Euclidean space $\R^{n+1}$ is a warped product space $[0,\infty) \times \mathbb{S}^n$, and the metric can be expressed as $d\rho^2 + \rho^2 g_{\mathbb{S}^n}$. Round spheres then can be regarded as a ``slice'' $\{\rho_0\} \times \mathbb{S}^n$ of this warped product. Constant curvature space-forms are examples of warped product spaces too. A variant of mean curvature flow was studied by Guan-Li \cite{GL15} when the ambient space is a space-form, and convergence to a slice was proved. The study of inverse curvature flows in non-Euclidean ambient spaces has been driven by the study of geometric inequalities, such as the Riemannian Penrose inequality by Huisken-Ilmanen \cite{HI01} and the Minkowski's inequality in anti-deSitter-Schwarzchild space by Brendle-Hung-Wang \cite{BHW16}, both using the IMCF. There are also works on more general warped product spaces by Zhou \cite{Z18} on IMCF, and Scheuer \cite{S17,S19} on more general homogeneous speed functions, where long-time convergence to an umbilical limit was proved under some natural assumptions on the warping function.

This article is mainly about establishing uniqueness and symmetry results of curvature flows in warped product spaces. In this regard, there are works about contraction flows by Gao-Ma \cite{GM19}, who proved the uniqueness of self-similar solutions to the $\sigma_k^\alpha$-flow and $S_k^\alpha$-flow (where $S_k = \sum_i \lambda_i^k$) with $\alpha > 0$ in the hemisphere, on 3-hyperbolic space. For expansion flows, there are recent works by Gao-Ma \cite{GM21} and Gao \cite{G23} which establish uniqueness results of self-similar solutions to flows by negative powers of $1$-homogeneous functions $f(\lambda_1,\cdots,\lambda_n)$ of principal curvatures on warped product spaces $I \times M^n$ with the Ricci curvature of $M^n$ and the warping function $\phi$ satisfy some conditions similar to those appeared in Brendle's work \cite{B13} and Brendle-Eichmair's work \cite{BE13} on constant mean curvature hypersurfaces in warped product spaces, together with some assumptions on the speed function $f$.

In this article, we consider self-similar solutions on warped product spaces $I \times M^n$ with different assumptions on $M^n$ and the speed function $f$. In most cases, we study the case when $M^n$ is a compact Riemannian homogeneous space, whose Lie algebra of Killing fields act transitively on $M^n$ by isometries. Examples of such spaces include the space forms $\R^n$, $\mathbb{S}^n$ and $\mathbb{H}^n$ with constant sectional curvature, and also complex projective space $\CP^n$ with the Fubini-Study metric which is Einstein but the sectional curvatures are not constant. See \cite{BN20} for more discussions on Riemannian homogeneous spaces. On these ambient spaces, we have established the following uniqueness result:

\begin{theorem}[= Theorem \ref{thm:symmetry}, Corollary \ref{cor:uniqueness} and Corollary \ref{cor:uniqueness_deg>-1}]
\label{thm:main1}
Let $N^{n+1} = I \times M^n$ be a warped product space with metric $\bar{g} = d\rho^2 + \phi(\rho)^2 g_M$, and $(M^n,g_M)$ being a compact homogeneous space. Suppose $\Sigma_0^n$ is a compact star-shaped self-similar solution in  along $X = \phi(\rho)\D{}{\rho}$ to the flow \eqref{eq:flow-perp}, i.e.
\[f=\varepsilon\bar{g}(X,\nu)\]
where $\varepsilon$ is a non-zero constant, and $f$ is a homogeneous function of principal curvatures with degree $\deg f$ satisfying $\D{f}{\lambda_i} > 0$ for any $i$. Suppose $\Sigma_0$, $\phi$, $\varepsilon$, and $f$ satisfy the condition
\begin{equation}
\label{eq:my_condition}
\dot{f}^{ij}g_{ij}\frac{\phi''}{\phi} + \varepsilon(1+\deg f)\phi' \geq 0,
\end{equation}
then $\Sigma_0$ must be a slice $\{\rho_0\} \times M^n$. In particular, if $\deg f = -1$, then the above condition holds when $\phi'' \geq 0$. If $\deg f > -1$, then the above condition holds when both $\phi' \geq 0$, $\phi'' \geq 0$, and $\varepsilon > 0$ (i.e. self-expanders).
\end{theorem}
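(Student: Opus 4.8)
The plan is to combine the conformal Killing structure of $X$ with the homogeneity of $M^n$ to reduce the problem to a single scalar maximum principle on $\Sigma_0$, in which \eqref{eq:my_condition} enters precisely as the sign of a zeroth-order coefficient. As explained before the statement, $\Sigma_0$ is a self-similar solution along $X$ to \eqref{eq:flow-perp} exactly when $f=\varepsilon u$ on $\Sigma_0$, where $u:=\bar g(X,\nu)$ and $\varepsilon\neq 0$, and star-shapedness means $u\neq 0$. Let $\Phi$ be a primitive of $\phi$, so $X=\bar\nabla\Phi$; since $X$ is a gradient conformal Killing field one has $\bar\nabla_i X_j=\phi'\bar g_{ij}$, and restricting to $\Sigma_0$ via the Gauss--Weingarten relations yields
\begin{equation}
\label{eq:plan-hess}
\nabla_i\nabla_j\Phi=\phi' g_{ij}-u\,h_{ij},\qquad \nabla_i u=h_{ik}\nabla^k\Phi,\qquad |\nabla\Phi|^2=\phi^2-u^2 .
\end{equation}
Thus $\Sigma_0$ is a slice iff $\nabla^{\Sigma_0}\Phi\equiv 0$ (equivalently $\nu=\pm\partial_\rho$ everywhere), which I take as the goal. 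Write $L:=\dot f^{ij}\nabla_i\nabla_j$, uniformly elliptic since $\partial f/\partial\lambda_i>0$; Euler's relation reads $\dot f^{ij}h_{ij}=(\deg f)f=(\deg f)\varepsilon u$.

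\textbf{Two linear equations.}
As $(M^n,g_M)$ is homogeneous, its Killing fields span every tangent space; each Killing field $V$ of $M^n$ lifts to a Killing field $\widetilde V$ of $(N^{n+1},\bar g)$ commuting with $X$ (since $V$ is $\rho$-independent and kills $\phi$), so the isometry flow $\exp(s\widetilde V)$ commutes with $\psi^X_t$ and carries $\Sigma_0$ to self-similar solutions with the same $\varepsilon$. Differentiating $f\equiv\varepsilon u$ along this variation, whose normal speed is $\varphi:=\bar g(\widetilde V,\nu)$ --- via the standard evolution formula for $h_{ij}$, the identity $\partial_s u=\phi'\varphi-\langle\nabla\Phi,\nabla\varphi\rangle$ (from $\bar\nabla_i X_j=\phi'\bar g_{ij}$), and the vanishing of the tangential velocity terms because $f-\varepsilon u\equiv 0$ --- produces a Jacobi-type equation
\begin{equation}
\label{eq:plan-jacobi}
L\varphi+\varepsilon\langle\nabla\Phi,\nabla\varphi\rangle+c\,\varphi=0,\qquad c:=\dot f^{ij}\bigl(h_i^k h_{kj}+\bar R_{i\nu j\nu}\bigr)-\varepsilon\phi' .
\end{equation}
A naive maximum principle on \eqref{eq:plan-jacobi} fails because $c$ carries the positive term $\dot f^{ij}h_i^k h_{kj}$. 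The key point --- the technical heart of the proof --- is that the \emph{same} operator, applied to $u=\bar g(X,\nu)$, has a completely explicit defect: using \eqref{eq:plan-hess}, the Codazzi equations, the warped-product curvature identities ($\bar R(\partial_\rho,\cdot,\partial_\rho,\cdot)=-(\phi''/\phi)\bar g$ on $\partial_\rho^{\perp}$, the vanishing mixed components, and the $R^M$-terms), the soliton equation $f=\varepsilon u$, and Euler's relation, one obtains
\begin{equation}
\label{eq:plan-Lu}
Lu+\varepsilon\langle\nabla\Phi,\nabla u\rangle+c\,u=-\Bigl(\dot f^{ij}g_{ij}\tfrac{\phi''}{\phi}+\varepsilon(1+\deg f)\phi'\Bigr)u ,
\end{equation}
with the \emph{same} $c$ and the \emph{same} first-order coefficient as in \eqref{eq:plan-jacobi}. (A quick consistency check: on a slice $Lu=0$, $\nabla u=0$, $h_{ij}=(\phi'/\phi)g_{ij}$, $\bar R_{i\nu j\nu}=-(\phi''/\phi)g_{ij}$, and the soliton relation forces $(\phi'/\phi)^2\dot f^{ij}g_{ij}=-\varepsilon(\deg f)\phi'$, so both sides of \eqref{eq:plan-Lu} agree.) In particular all ambient-curvature contributions, including the intrinsic curvature $R^M$ of the non-space-form factor, appear identically in \eqref{eq:plan-jacobi} and \eqref{eq:plan-Lu}, so that homogeneity of $M^n$ is needed only to produce the Killing fields and, later, for transitivity --- no curvature condition on $M^n$ is imposed.

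\textbf{Quotient and maximum principle.}
Since $u\neq 0$ on the closed $\Sigma_0$, set $w:=\varphi/u$. Expanding $L(wu)=uLw+2\dot f^{ij}(\nabla_i w)(\nabla_j u)+wLu$ and substituting \eqref{eq:plan-jacobi}--\eqref{eq:plan-Lu}, the $c\,\varphi$- and $\langle\nabla\Phi,\nabla u\rangle$-terms cancel, leaving the homogeneous drift equation
\begin{equation}
\label{eq:plan-w}
Lw+\varepsilon\langle\nabla\Phi,\nabla w\rangle+\tfrac{2}{u}\dot f^{ij}(\nabla_i u)(\nabla_j w)-\Bigl(\dot f^{ij}g_{ij}\tfrac{\phi''}{\phi}+\varepsilon(1+\deg f)\phi'\Bigr)w=0 ,
\end{equation}
whose zeroth-order coefficient is $\le 0$ precisely by \eqref{eq:my_condition}. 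The maximum principle on the compact boundaryless $\Sigma_0$ then forces $w\equiv 0$ (at a positive maximum $Lw\le 0$, the first-order terms vanish, the zeroth-order term gives a contradiction, so $w\le 0$; symmetrically $w\ge 0$). Hence $\bar g(\widetilde V,\nu)\equiv 0$ for every Killing field $V$ of $M^n$, so $\Sigma_0$ is invariant under the transitively acting isometry group of $N^{n+1}$ generated by the $\widetilde V$'s. Star-shapedness realizes the compact $\Sigma_0$ as a radial graph $\{(\rho_0(w),w):w\in M^n\}$ over $M^n$, and invariance forces $\rho_0$ to be constant; thus $\Sigma_0=\{\rho_0\}\times M^n$ is a slice. (Equivalently, $\Sigma_0$ is rotationally symmetric --- this is Theorem~\ref{thm:symmetry}.)

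\textbf{Specializations and the main obstacle.}
The two concrete statements follow by inspecting \eqref{eq:my_condition}: for $\deg f=-1$ it reduces to $\dot f^{ij}g_{ij}\,\phi''/\phi\ge 0$, valid whenever $\phi''\ge 0$ because $\dot f^{ij}g_{ij}=\sum_i\partial f/\partial\lambda_i>0$ and $\phi\ge 0$ (this includes IMCF); for $\deg f>-1$ and $\varepsilon>0$ (self-expanders) both summands of \eqref{eq:my_condition} are non-negative once $\phi'\ge 0$ and $\phi''\ge 0$. I expect the main obstacle to be establishing \eqref{eq:plan-Lu}: organizing the Codazzi-and-curvature computation in a general warped product carefully enough that exactly the operator of \eqref{eq:plan-jacobi} reappears on $u$ and the residual inhomogeneity is precisely minus the left-hand side of \eqref{eq:my_condition} times $u$. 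Once \eqref{eq:plan-jacobi}--\eqref{eq:plan-Lu} are in place, the quotient trick and the maximum principle are routine (the only minor point is to confirm that a closed star-shaped $\Sigma_0$ is a single graph over $M^n$, which is standard).
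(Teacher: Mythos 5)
Your proposal is correct and follows essentially the same route as the paper: lift the Killing fields of the homogeneous factor $M^n$ to Killing fields of $N^{n+1}$ commuting with $X$, derive two linear second-order equations for the support functions $\bar g(K,\nu)$ and $\bar g(X,\nu)$ whose lower-order structure matches, form the quotient so that condition \eqref{eq:my_condition} becomes the sign of the zeroth-order coefficient, apply the maximum principle on the compact $\Sigma_0$, and conclude via transitivity that $\Sigma_0$ is a slice. The only (cosmetic) difference is in how the two equations are obtained --- you get the $\varphi$-equation by linearizing the soliton identity $f=\varepsilon u$ along the Killing variation and the $u$-equation by a direct elliptic computation on $\Sigma_0$, whereas the paper derives both from the parabolic evolution equation of the support function of a conformal Killing field (Proposition 2.1) together with the time-invariance of the quotient under the self-similar flow; the resulting identities are exactly the paper's \eqref{eq:box_K} and \eqref{eq:box_X} (equivalently \eqref{eq:L_K}--\eqref{eq:L_X}), as your slice consistency check confirms.
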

Examples of spaces satisfying both $\phi' > 0$ and $\phi'' > 0$ include the Euclidean space $\R^{n+1}$ (where $\phi = \rho$), the hyperbolic space $\mathbb{H}^{n+1}$ (where $\phi = \sinh\rho$), and more generally the anti-deSitter-Schwarzschild space.

It is interesting to compare the above result with those appeared in Gao-Ma's work \cite{GM21} and Gao's work \cite{G23}. In their works, the factor $(M^n, g_M)$ in the warped product, and the warping function $\phi$ are assumed to satisfy the condition such as $\Ric_M \geq (n-1)((\phi')^2 - \phi\phi'')g_M$ in case of $H^{-\alpha}$ flow (where $\alpha > 0$), or in the case $(M^n, g_M)$ has constant sectional curvature $c \in \R$, the warping function satisfies
\begin{equation}
\label{eq:Gao_condition}	
\phi' > 0 \quad \text{ and } \quad \frac{\phi''}{\phi} + \frac{c-(\phi')^2}{\phi^2} \geq 0.
\end{equation}

In the later case, the speed function is the negative power of a $1$-homogeneous function satisfying some conditions (see Condition 3 in \cite{G23} for detail). The value of $\alpha$ in \cite{GM21,G23} could be any positive number.

In our Theorem \ref{thm:main1}, the factor $M^n$ needs not have constant sectional curvature or some lower bound of Ricci curvature. Instead, we allow it to be any compact homogeneous space whose isometries generated by Killing fields act transitively on $M$. For the warping function $\phi$ requires different conditions than those in \cite{GM21,G23}, as ours require $\phi'' \geq 0$ when $\deg f = -1$, and for $\deg f > -1$ we need $\phi' \geq 0$ and $\phi'' \geq 0$. Euclidean and hyperbolic spaces satisfy both \eqref{eq:Gao_condition} and our condition \eqref{eq:my_condition}, while there are some spaces satisfy only either one. For the speed function $f$, Theorem \ref{thm:main1} basically allow it to be any homogeneous function of degree $> -1$, and that the flow is parabolic in a sense that $\D{f}{\lambda_i} > 0$, whereas in \cite{GM21, G23} there are some other conditions that $f$ needs to fulfill. The methodology adopted to prove the uniqueness results are also different. Key ingredients in \cite{GM21, G23} include the use of Heintze-Karcher inequality, and some maximum principle argument applied to show auxiliary function. We used a different approach by considering the Killing vector fields lifted from those on $M^n$ -- there are plenty of them as a homogeneous space -- and proved that those Killing fields are tangential to the self-similar solution. As the Killing fields span the tangent space of $M^n$, this would show $\Sigma^n$ and $M^n$ have the same tangent subspace in $TN^{n+1}$. The warped product structure then shows $\Sigma^n$ must be a slice. Similar idea are applied to prove the uniqueness of self-similar solution to flows by degree $-1$ homogeneous speed function in $\R^{n+1}$ in \cite{CCF21}, yet there are subtle differences in case of more general warped product spaces. For example, self-similar solutions in $\R^{n+1}$ are homothetic whereas in warped products the self-similar solutions flow in a conformal way.

\subsection{Non-compact self-similar solutions}
Apart from the uniqueness result of compact self-similar solutions, we have also extended some rotational symmetry results of MCF (and its positive power) self-expanders in Euclidean spaces to more general warped product space. In the work \cite{FM19} by the author and McGrath, we proved that mean-convex asymptotically conical MCF self-expanders are rotationally symmetric. Inspired by the fact that $\R^{n+1}$ is a doubly-warped product $[0,\infty) \times [0,\pi] \times \mathbb{S}^{n-1}$ with metric
\[\bar{g} = d\rho^2 + \rho^2 (d\theta^2 + (\sin^2\theta)g_{\mathbb{S}^{n-1}}),\]
and a cone is simply the subset $\{\theta = \theta_0\}$, one can also consider doubly-wrapped product $N^{n+1} = I \times J \times S$ with metric
\[\bar{g} = d\rho^2 + \phi(\rho)^2 (d\theta^2 + r(\theta)^2 g_S)\]
so that a ``cone'' is the subset $\mathcal{C}(\theta_0) = \{\theta = \theta_0\}$ where $\theta_0 \in J$ is fixed.

We have extended our work \cite{FM19} to more general doubly-warped spaces under some conditions on the speed function $f$ and the warping function $\phi$ (see Proposition \ref{prop:non-compact} for detail). In particular, as an application of Proposition \ref{prop:non-compact}, we established the following:

\begin{theorem}[= Theorem \ref{thm:AC}] 
\label{thm:main2}
Suppose $\Sigma_0^n$ is a complete non-compact star-sharped expanding self-similar solution to the $\alpha$-mean curvature flow $\D{F}{t} = H^\alpha\nu$, $\alpha > 0$, on the anti-deSitter-Schwarzschild's space (including hyperbolic space) $N^{n+1} = [s_0,\infty) \times [0,\pi] \times \mathbb{S}^{n-1}$. Assume $\Sigma_0$ is $C^2$-asymptotic to a cone $\mathcal{C}(\theta_0)$, then $\Sigma_0$ must be rotationally symmetric, in a sense that $\Sigma_0$ is invariant under the flow map $\psi_t^K$ generated by any Killing vector field of $\mathbb{S}^{n-1}$.
\end{theorem}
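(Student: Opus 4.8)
The plan is to obtain Theorem~\ref{thm:main2} as an application of Proposition~\ref{prop:non-compact}, so the real task is to verify that the $\alpha$-mean curvature flow on the anti-deSitter--Schwarzschild space $N^{n+1}=[s_0,\infty)\times[0,\pi]\times\mathbb{S}^{n-1}$ fits the doubly-warped framework of that proposition and — at the heart of the matter — to show that every Killing field of the round factor $\mathbb{S}^{n-1}$, lifted to $N^{n+1}$, is everywhere tangent to $\Sigma_0$. Indeed, if $K$ is a Killing field of $\mathbb{S}^{n-1}$ and I lift it, then in coordinates $(\rho,\theta,\omega)$ the field $K=K(\omega)$ involves only the $\mathbb{S}^{n-1}$-directions while $X=\phi(\rho)\,\D{}{\rho}$ involves only the $\rho$-direction; hence $K$ is still a Killing field of $(N^{n+1},\bar g)$ and $[X,K]=0$, so the isometries $\psi_s^K$ commute with the conformal flow $\psi_t^X$ and carry self-expanders of \eqref{eq:flow-perp} to self-expanders with the same constant $\varepsilon$. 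Consequently $\{\psi_s^K(\Sigma_0)\}_s$ is a one-parameter family of self-expanders whose normal variation field at $s=0$ is $\bar g(K,\nu)\,\nu$; once I know $\bar g(K,\nu)\equiv 0$ for all such $K$, the distribution $T\mathbb{S}^{n-1}$ is tangent to $\Sigma_0$, so $\Sigma_0$ is invariant under each $\psi_t^K$, which is exactly the asserted rotational symmetry.

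First I would derive the elliptic equation for $u:=\bar g(K,\nu)$. Linearizing the self-similar equation $H^\alpha=\varepsilon\,\bar g(X,\nu)$ along the variation $u\nu$, and using $\mathcal L_X\bar g=2\phi'\bar g$ together with the Gauss and Codazzi identities, one arrives (up to the standard sign conventions) at a drift-elliptic equation
\[
L u \;:=\; \dot{f}^{ij}\nabla_i\nabla_j u \;-\; \varepsilon\,\bar g\!\left(X^{\top},\nabla u\right) \;+\; c\,u \;=\; 0 ,
\]
where $X^{\top}$ is the tangential part of $X$ and the zeroth-order coefficient $c$ is governed by the left-hand side of \eqref{eq:my_condition} with $\deg f=\alpha$, plus manifestly nonnegative curvature remainders; in particular $c$ has the sign required for a maximum principle precisely because $\phi'\geq 0$, $\phi''\geq 0$, $\varepsilon>0$ and $\alpha>-1$. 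Here I would use that star-shapedness together with $\varepsilon>0$ forces $H^\alpha=\varepsilon\,\bar g(X,\nu)>0$, so $H>0$ and hence $\D{f}{\lambda_i}=\alpha H^{\alpha-1}>0$; thus the flow is genuinely parabolic and, after the asymptotic reduction below, $L$ is uniformly elliptic. This is the algebraic content abstracted in Proposition~\ref{prop:non-compact}; for the present ambient space it also requires checking that the anti-deSitter--Schwarzschild warping function $\phi$, which only approaches $\sinh\rho$ as $\rho\to\infty$, nonetheless satisfies $\phi'\geq 0$ and $\phi''\geq 0$ on all of $[s_0,\infty)$.

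Next I would extract the behaviour of $u$ at infinity. The cone $\mathcal C(\theta_0)=\{\theta=\theta_0\}$ is itself rotationally symmetric, so every lifted Killing field of $\mathbb{S}^{n-1}$ is tangent to it and $\bar g(K,\nu_{\mathcal C})\equiv 0$; since $\Sigma_0$ is $C^2$-asymptotic to $\mathcal C(\theta_0)$, the Gauss map $\nu$ converges to $\nu_{\mathcal C}$ along $\Sigma_0$ with a controlled rate, and writing $u=\bar g(K,\nu-\nu_{\mathcal C})$ one sees that the suitably normalized $u$ tends to $0$ at infinity. Combining this decay with the sign of $c$ and the inward-pointing drift of the self-expander, a maximum principle at infinity — an Omori--Yau/barrier argument on the complete non-compact $\Sigma_0$, which is the substance of Proposition~\ref{prop:non-compact} — yields $u\equiv 0$. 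Running this over a basis of Killing fields of $\mathbb{S}^{n-1}$, which span $T\mathbb{S}^{n-1}$, gives the tangency of the whole $\mathbb{S}^{n-1}$-distribution, and then the doubly-warped structure (with star-shapedness ruling out degenerate intersections) forces $\Sigma_0$ to be invariant under every $\psi_t^K$.

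I expect the main obstacle to be the maximum principle at infinity. On a complete non-compact hypersurface one cannot invoke the strong maximum principle directly, and the $C^2$-asymptotically-conical hypothesis has to be used carefully to (i) guarantee uniform ellipticity of $L$ and controlled coefficients outside a compact set, (ii) construct a supersolution dominating $u$ near infinity — delicate here because $\abs{K}$ grows like $\phi(\rho)\sim\sinh\rho$, so the precise meaning of ``$C^2$-asymptotic to a cone'' in this warped setting must be calibrated so that the decay of $\nu-\nu_{\mathcal C}$ outpaces that growth — and (iii) ensure that passing to the asymptotic regime does not destroy the sign of $c$, even though $\phi$ is only asymptotically $\sinh\rho$. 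A secondary technical point is confirming that the speed $f=H^\alpha$ meets the structural conditions on $f$ in Proposition~\ref{prop:non-compact} for every $\alpha>0$: positivity of $\D{f}{\lambda_i}$ along $\Sigma_0$ is immediate from $H>0$, but the homogeneity-degree and any concavity-type requirements there should be checked to be compatible with all positive powers of the mean curvature.
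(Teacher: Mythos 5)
Your overall strategy coincides with the paper's: reduce to Proposition \ref{prop:non-compact}, lift Killing fields of $\mathbb{S}^{n-1}$ to Killing fields $\widetilde{K}$ of $N^{n+1}$ with $[X,\widetilde{K}]=0$, verify the decay \eqref{eq:asymptotics}, and use transitivity of the isometry group of $\mathbb{S}^{n-1}$ to conclude. However, your description of the maximum-principle mechanism contains a genuine gap. You propose to run the maximum principle directly on $u=\bar g(K,\nu)$ via a linearized equation whose zeroth-order coefficient $c$ you claim has a good sign because of $\phi'\geq 0$, $\phi''\geq 0$ and $\alpha>0$. This is false: the equation satisfied by $\bar g(K,\nu)$ alone (equation \eqref{eq:L_K} in the paper, rearranged) has zeroth-order coefficient containing $\dot f^{ij}(h^2)_{ij}=\alpha H^{\alpha-1}\abs{A}^2\geq 0$, which enters with the \emph{wrong} sign for the maximum principle and is not controlled on the compact part of $\Sigma_0$. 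The entire point of Proposition \ref{prop:non-compact} is to pass to the quotient $u=\bar g(K,\nu)/(\bar g(X,\nu)+\varepsilon)$: in the quotient the terms $\dot f^{ij}(h^2)_{ij}$ and $\dot f^{ij}\overline{\Rm}(e_i,\nu,e_j,\nu)$ cancel between numerator and denominator up to an $O(\varepsilon)$ remainder, and it is this remainder that hypothesis \eqref{eq:condition_epsilon} (positivity of $P$) controls. Your proposal never introduces this quotient and instead invokes condition \eqref{eq:my_condition}, which is the hypothesis of the \emph{compact} uniqueness theorem and is not the relevant condition here.

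Consequently, the actual substance of the paper's proof of Theorem \ref{thm:AC} is absent from your write-up: one must verify \eqref{eq:condition_epsilon}, i.e.\ $P(\Sigma_0,\phi,f,\tau,\varepsilon)>0$, and this is where the $C^2$-conical asymptotics are really used. The paper computes $\dot f^{ij}(h^2)_{ij}=\alpha H^{\alpha-1}\abs{A}^2$ and $\overline{\Rm}(e_i,\nu,\nu,e_j)=-g_{ij}+o(1)$, uses that the principal curvatures of $\mathcal C(\theta_0)$ are $\{0,\cot\theta_0/\phi,\dots\}$ so that $\abs{A}^2\to 0$ along the end, deduces $P>0$ on $\Sigma_0\cap\{\rho>\rho_0\}$ for every $\varepsilon>0$, and then uses compactness of $\Sigma_0\cap\{\rho\leq\rho_0\}$ together with $H^\alpha=\tau'\bar g(X,\nu)>0$ to choose $\varepsilon$ small enough that $P>0$ everywhere. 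On the other hand, your worry about a "maximum principle at infinity" (Omori--Yau, barriers) is unnecessary once the quotient is used: the decay \eqref{eq:asymptotics} forces any nonzero value of $u$ to produce an interior extremum on a compact sublevel set $\{\rho\leq\rho_0\}$, and the ordinary maximum principle applies there. Your observation that $\abs{K}$ grows like $\phi(\rho)$, so that the meaning of $C^2$-asymptotic must be calibrated to make \eqref{eq:asymptotics} hold, is a fair point that the paper itself glosses over, but it does not substitute for the missing verification of \eqref{eq:condition_epsilon}.
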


\section{Conformal Killing Fields and Support Functions}
In this section, we consider the support function of a conformal Killing vector field on a hypersurface $\Sigma$ in $N^{n+1}$, and its evolution equation when $\Sigma$ under a flow. A vector field $K$ on $(N^{n+1},\bar{g})$ is said to be a conformal Killing field if there exists a scalar function $\kappa$ such that
\[\mathcal{L}_K\bar{g} = 2\kappa\bar{g}\]
Since the Lie derivative can be locally written as
\[(\mathcal{L}_V\bar{g})_{\alpha\beta} = \bar\nabla_\alpha K_\beta + \bar\nabla_\beta K_\alpha\]
where $\bar\nabla$ is the Levi-Civita connection of $\bar{g}$. By contracting with $\bar{g}^{\alpha\beta}$, we must have $\kappa = \dfrac{\div_{\bar{g}}K}{n+1}$.

Let $\Sigma_t$ be an evolving hypersurface in $N$ parametrized by $F_t(u_1, \cdots, u_n)$ such that
\begin{equation}
\label{eq:f-flow}
\D{F_t}{t} = f\nu_t
\end{equation}
where $f(\lambda_1,\cdots,\lambda_n)$ is a symmetric speed function depending on the principal curvatures $\lambda_i$'s of $\Sigma_t$, and $\nu_t$ is the Gauss map of $\Sigma_t$. Our next goal is to derive the evolution equations of the support function of $K$, defined by
\[\bar{g}( K \circ F_t, \nu_t), \qquad \text{ or in short:} \quad \bar{g}(K,\nu).\]
Some similar results appeared in the author's work \cite{F20} for Killing fields $\R^{n+1}$, in \cite{GL15,GLW19} when $K = X$, and in \cite{FR23} for some specific flow.

Let $\{e_i\}$ be an orthonormal frame on $\Sigma_t$, and regard $f$ as a function of the second fundamental form $h_{ij} = h(e_i,e_j)$, we denote
\[\dot{f}^{ij} := \D{f}{h_{ij}}.\]

\begin{proposition}
Let $(N^{n+1},\bar{g})$ be any Riemannian manifold. Suppose $K$ is a conformal Killing field on $(N^{n+1},\bar{g})$ such that
\[\mathcal{L}_K\bar{g} = 2\kappa\bar{g}\]
for some scalar function $\kappa$. Then, if a hypersurface $\Sigma_t$ evolves by the flow \eqref{eq:f-flow}, we have
\begin{align}
\label{eq:support}
& \left(\D{}{t} - \dot{f}^{ij}\nabla_i\nabla_j\right)\bar{g}(K,\nu)\\
& = \dot{f}^{ij}g_{ij}\bar\nabla_\nu\kappa -\dot{f}^{ij}\overline{\Rm}(e_i,\nu,e_j,\nu)\bar{g}(K,\nu) + f\kappa  + \dot{f}^{ij}h_{ij}\kappa + \dot{f}^{ij} (h^2)_{ij}\bar{g}(K,\nu) \nonumber
\end{align}
\end{proposition}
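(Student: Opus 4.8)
\medskip

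The plan is to compute both sides of \eqref{eq:support} directly by differentiating the support function $u := \bar{g}(K,\nu)$ along the flow and comparing with its spatial Laplacian $\dot{f}^{ij}\nabla_i\nabla_j u$. First I would record the standard variation formulas under \eqref{eq:f-flow}: the normal variation $\partial_t \nu = -\nabla f$ (gradient on $\Sigma_t$), the metric variation $\partial_t g_{ij} = 2 f h_{ij}$, and the evolution of the second fundamental form $\partial_t h_{ij} = \nabla_i\nabla_j f - f (h^2)_{ij} + f\,\overline{\Rm}(e_i,\nu,e_j,\nu)$ (signs fixed by the convention used for the slices above). Then $\partial_t u = \bar{g}(\bar\nabla_{\partial_t F} K,\nu) + \bar{g}(K,\partial_t \nu)$; the first term is handled by the conformal Killing equation, writing $\bar{g}(\bar\nabla_W K, Z) + \bar{g}(\bar\nabla_Z K, W) = 2\kappa\,\bar{g}(W,Z)$ with $W = \partial_t F = f\nu$, which produces $f\kappa$ together with a term $-\bar{g}(\bar\nabla_\nu K, f\nu) = -f\,\bar{g}(\bar\nabla_\nu K,\nu) + \ldots$ that must be tracked carefully. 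The second term, $\bar{g}(K,\partial_t\nu) = -\bar{g}(K, \nabla f) = -\nabla_i f\,\bar{g}(K,e_i)$, is the piece that will eventually cancel against a matching term coming from the Laplacian.

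\medskip

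Next I would compute the spatial derivatives of $u$. Using the Weingarten relation $\bar\nabla_{e_i}\nu = h_i^{\ k} e_k$ and the tangential decomposition $K = K^\top + u\,\nu$, one gets $\nabla_i u = \bar{g}(\bar\nabla_{e_i} K,\nu) + \bar{g}(K, h_i^{\ k} e_k) = \bar{g}(\bar\nabla_{e_i}K,\nu) + h_i^{\ k}\bar{g}(K,e_k)$. Differentiating once more and using the conformal Killing equation to rewrite $\bar{g}(\bar\nabla_{e_i}K,e_j)$ in terms of $\kappa g_{ij}$ and the antisymmetric part, together with the second Bianchi-type manipulation $\bar\nabla_i\bar\nabla_j K_\nu$ involving the curvature tensor $\overline{\Rm}(e_i,e_j,\cdot,\cdot)$ acting on $K$, gives $\nabla_i\nabla_j u$ as a sum of: a Hessian-of-$\kappa$ contribution, a term $\kappa h_{ij}$, a curvature term $\overline{\Rm}(e_i,\nu,e_j,\nu)\,u$, a $(h^2)_{ij}\,u$ term, and a term $h_{ij,k}\,\bar{g}(K,e_k)$ (Codazzi) that matches $-\nabla_k f\,\bar{g}(K,e_k)$ after contracting with $\dot{f}^{ij}$ and using $\dot{f}^{ij}\nabla_k h_{ij} = \nabla_k f$. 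Contracting with $\dot{f}^{ij}$ and subtracting from $\partial_t u$, the gradient terms cancel, the $\kappa h_{ij}$, curvature, and $(h^2)_{ij}$ terms assemble into the stated right-hand side, and the Hessian-of-$\kappa$ term contracted with $\dot{f}^{ij}$ must be shown to collapse to $\dot{f}^{ij}g_{ij}\,\bar\nabla_\nu\kappa$.

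\medskip

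The main obstacle I anticipate is precisely this last point: the terms involving $\kappa$. A priori one expects $\dot{f}^{ij}$ contracted against the full ambient Hessian $\bar\nabla_i\bar\nabla_j\kappa$ restricted to $\Sigma$, not just the normal derivative $\bar\nabla_\nu\kappa$. The resolution must use that $K$ is conformal Killing, so $\kappa = \div_{\bar g} K/(n+1)$ satisfies the classical overdetermined identity $\bar\nabla_\alpha\bar\nabla_\beta\kappa = -\overline{\Rm}_{\alpha\beta}{}$-type relations forcing the tangential Hessian of $\kappa$ to be pure-trace (in fact, for a conformal Killing field, $\bar\nabla^2\kappa$ is determined by the ambient Ricci tensor and $\kappa$ itself); combined with how $\kappa$ varies in the normal direction, the only surviving piece after the $\dot{f}^{ij}$-contraction with the other curvature and $h^2$ terms is $\dot{f}^{ij}g_{ij}\,\bar\nabla_\nu\kappa$. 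I would isolate this as a short lemma on conformal Killing fields (or simply invoke that in the cases of interest, where $\kappa = \phi'(\rho)$ and $K = X$, one checks the identity by hand), and then the bookkeeping of all remaining terms is routine, though it requires care with sign conventions for $h_{ij}$, $\nu$, and $\overline{\Rm}$ consistent with the slice computation given earlier in the paper.
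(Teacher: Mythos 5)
Your overall strategy coincides with the paper's: compute $\partial_t\bar g(K,\nu)$ and $\dot f^{ij}\nabla_i\nabla_j\bar g(K,\nu)$ separately, use the conformal Killing equation to produce the $f\kappa$ and $\kappa\,\dot f^{ij}h_{ij}$ terms, and cancel $-\bar g(K,\nabla f)$ against the Codazzi term $-\dot f^{ij}(\nabla_l h_{ij})K_l=-(\nabla_l f)K_l$. Those parts are fine. The genuine gap is exactly at the step you flag as the ``main obstacle'': your description of what appears there, and of how to resolve it, is not correct. No Hessian of $\kappa$ can occur in $\nabla_i\nabla_j\bar g(K,\nu)$. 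The support function contains $K$ undifferentiated, so two derivatives of it produce at most $\bar\nabla^2 K$, and since the conformal Killing equation ties $\bar\nabla K$ to $\kappa$ (zero derivatives of $\kappa$), the second derivatives of $K$ involve at most \emph{first} derivatives of $\kappa$. Concretely, the relevant identity is
\[
\bar\nabla_a\bar\nabla_b K_c \;=\; \overline{R}^{\,d}{}_{abc}K_d+(\bar\nabla_a\kappa)\,\bar g_{bc}+(\bar\nabla_b\kappa)\,\bar g_{ac}-(\bar\nabla_c\kappa)\,\bar g_{ab},
\]
and evaluating it with $a=e_i$, $b=e_j$ tangential and $c=\nu$ kills the two tangential gradient terms because $\bar g(e_i,\nu)=\bar g(e_j,\nu)=0$, leaving precisely $-(\bar\nabla_\nu\kappa)g_{ij}$, which after contraction with $\dot f^{ij}$ is the term $\dot f^{ij}g_{ij}\bar\nabla_\nu\kappa$ in \eqref{eq:support}. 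There is nothing to ``collapse to pure trace'' and no need for the overdetermined identity expressing $\bar\nabla^2\kappa$ through the Ricci tensor; invoking it would be attacking a term that is not present. Your fallback of verifying the identity only for $K=X$, $\kappa=\phi'(\rho)$ would also not prove the proposition, which is stated for an arbitrary conformal Killing field on an arbitrary $(N^{n+1},\bar g)$.

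The paper reaches the same conclusion by a slightly different bookkeeping that you should be aware of: it first uses the conformal Killing equation to trade $\bar g(\bar\nabla_j K,\nu)=2\kappa\,\bar g(e_j,\nu)-\bar g(\bar\nabla_\nu K,e_j)=-\bar g(\bar\nabla_\nu K,e_j)$ \emph{before} differentiating again, then commutes $\bar\nabla_i\bar\nabla_\nu K$ into $\bar\nabla_\nu\bar\nabla_i K$ plus curvature and bracket terms; symmetrizing with $\dot f^{ij}$ turns $\nu\big(\bar g(\bar\nabla_iK,e_j)+\bar g(\bar\nabla_jK,e_i)\big)$ into $\nu(2\kappa\,\bar g(e_i,e_j))$, which again yields only $\bar\nabla_\nu\kappa$. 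Either route works; you need to carry out one of them rather than the Ricci-identity argument you sketched. Two smaller points: your sign convention $\bar\nabla_{e_i}\nu=h_i{}^k e_k$ is opposite to the paper's $\bar\nabla_i\nu=-h_{il}e_l$ (the latter is what makes the slices have positive principal curvatures $\phi'/\phi$ with $\nu=-\partial_\rho$), so your intermediate formulas must be rechecked against it; and the evolution equation for $h_{ij}$ you list is never needed for this proposition.
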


\begin{proof}
We follow a similar approach as in the author's work \cite{F20}. First let $(x_\alpha)$ be the local coordinates of $(N,\bar{g})$ and $(u_i)$ be the local coordinates of $(\Sigma_t,g_t)$. Then, one can write
\begin{align*}
K & = K^\alpha \D{}{x_\alpha}, & \nu & = \nu^\alpha \D{}{x_\alpha},
\end{align*}
and so locally we have
\[\bar{g}(K,\nu) = \bar{g}_{\alpha\beta}K^\alpha\nu^\beta.\]
Express $F(u_1,\cdots,u_n) = (x_1(u_i), \cdots, x_{n+1}(u_i))$, then \eqref{eq:f-flow} is equivalent to
\begin{equation}
\D{x_\alpha}{t} = f\nu^\alpha
\end{equation}
By the chain rule, we have:
\begin{align*}
\D{}{t}\bar{g}(K,\nu) & = \D{\bar{g}_{\alpha\beta}}{x_\gamma}\D{x_\gamma}{t} K^\alpha \nu^\beta + \bar{g}_{\alpha\beta} \D{K^\alpha}{x_\gamma}\D{x_\gamma}{t} \nu^\beta + \bar{g}_{\alpha\beta}K^\alpha \D{\nu^\beta}{t}\\
& = \partial_\gamma \bar{g}_{\alpha\beta} \cdot f\nu^\gamma \nu^\beta K^\alpha + f\bar{g}_{\alpha\beta}\D{K^\alpha}{x_\gamma}\nu^\gamma\nu^\beta - \bar{g}(K,\nabla f).
\end{align*}
Here we used the fact that
\[\D{\nu}{t} = -\nabla f.\]
Then under normal coordinates $\bar{g}_{\alpha\beta} = \delta_{\alpha\beta}$ and $\partial_\gamma \bar{g}_{\alpha\beta} = 0$, we have
\[\D{}{t}\bar{g}(K,\nu) = f\D{K^\alpha}{x_\gamma}\nu^\gamma\nu^\alpha - \bar{g}(K, \nabla f)\]
For the first term, we consider:
\[\left(\D{K^\alpha}{x_\gamma} + \D{K^\gamma}{x_\alpha}\right)\nu^\gamma\nu^\alpha = \left(\mathcal{L}_K \bar{g}\right)_{\alpha\gamma} \nu^\alpha\nu^\gamma = 2\kappa,\]
and hence we have
\[\D{K^\alpha}{x_\gamma}\nu^\gamma\nu^\alpha = \kappa.\]
This concludes that
\begin{equation}
\label{eq:ddt}
\D{}{t}\bar{g}(K,\nu) = f\kappa - \bar{g}(K,\nabla f).
\end{equation}

Next we compute the Hessian term $\nabla_i\nabla_j \bar{g}(K,\nu)$, using orthonormal frame computations. Let $\{e_i\}_{i=1}^n$ be an orthonormal frame of $T\Sigma$ such that at a point $p \in \Sigma$ we have $\nabla_{e_i}e_j = 0$. Denote $\nabla_i := \nabla_{e_j}$ to be the covariant derivative on $\Sigma$, and $\bar\nabla$ to be the Levi-Civita connection of $(N,\bar{g})$. Recall that $K$ is a conformal Killing field, so we have
\[\bar{g}(\bar\nabla_X K, Y) + \bar{g}(\bar\nabla_Y K, X) = 2\kappa\bar{g}(X,Y)\]
for any vector fields $X, Y$ on $N$. Using this, we can derive that at $p$, we have
\begin{align}
\label{eq:Hess}
& \nabla_i\nabla_j\big(\bar{g}(K,\nu))\\
& = e_i\big(e_j(\bar{g}(K,\nu))\big)-(\nabla_{e_i}e_j)\bar{g}(K,\nu) \nonumber \\
& = e_i\big(\bar{g}(\bar\nabla_j K, \nu) + \bar{g}(K, \bar\nabla_j\nu)\big) - 0 \nonumber\\
& = e_i\big(2\kappa\bar{g}(e_j,\nu)-\bar{g}(\bar\nabla_\nu K, e_j)\big)+\bar{g}(\bar\nabla_i K, \bar\nabla_j\nu)+\bar{g}(K,\bar\nabla_i\bar\nabla_j\nu) \nonumber\\
& = 0 - \bar{g}(\bar\nabla_i\bar\nabla_\nu K, e_j) - \bar{g}(\bar\nabla_\nu K, \bar\nabla_{e_i}e_j) +\bar{g}(\bar\nabla_i K, \bar\nabla_j\nu)+\bar{g}(K,\bar\nabla_i\bar\nabla_j\nu) \nonumber\\
& = -\underbrace{\bar{g}(\bar\nabla_i\bar\nabla_\nu K, e_j)}_{\text{(I)}} - \underbrace{\bar{g}(\bar\nabla_\nu K, h_{ij}\nu)}_{\text{(II)}} + \underbrace{\bar{g}(\bar\nabla_i K, \bar\nabla_j\nu)}_{\text{(III)}} + \underbrace{\bar{g}(K,\bar\nabla_i\bar\nabla_j\nu)}_{\text{(IV)}}. \nonumber
\end{align}
For term (I), we consider the Riemann curvature tensor $\overline{\Rm}$ of $(N, \bar{g})$:
\begin{equation}
\label{eq:term_I}
\overline{\Rm}(e_i,\nu,K,e_j) = \bar{g}(\bar\nabla_i\bar\nabla_\nu K-\bar\nabla_\nu\bar\nabla_i K - \bar\nabla_{[e_i,\nu]}K, e_j).	
\end{equation}
For term (II) of \eqref{eq:Hess}, we recall that $K$ is a conformal Killing field, so
\begin{equation}
\label{eq:term_II}
\bar{g}(\bar\nabla_\nu K, \nu) + \bar{g}(\bar\nabla_\nu K, \nu) = 2\kappa \bar{g}(\nu,\nu) = 2\kappa \implies \bar{g}(\bar\nabla_\nu K, \nu) = \kappa.	
\end{equation}
Note that $\bar\nabla_j\nu = -h_{jl}e_l$, and so
\[\bar\nabla_i \bar\nabla_j \nu = -(\nabla_i h_{jl})e_l - h_{jl}\bar\nabla_i e_l = -(\nabla_ih_{jl})e_l - h_{jl}h_{il}\nu.\]
The term (IV) of \eqref{eq:Hess} can then be written as
\begin{equation}
\label{eq:term_IV}
\bar{g}(K,\bar\nabla_i\bar\nabla_j\nu) = -(\nabla_i h_{jl})K_l - h_{jl}h_{il}\bar{g}(K,\nu).
\end{equation}
Substituting \eqref{eq:term_I}, \eqref{eq:term_II} and \eqref{eq:term_IV} back into \eqref{eq:Hess}, we then get:
\begin{align*}
& \nabla_i\nabla_j\big(\bar{g}(K,\nu))\\
& = -\overline{\Rm}(e_i,\nu,K,e_j)-\bar{g}(\bar\nabla_\nu\bar\nabla_iK + \bar\nabla_{[e_i,\nu]}K, e_j)\\
& \hskip 0.5cm -  h_{ij}\kappa + \bar{g}(\bar\nabla_i K, \bar\nabla_j \nu) - (\nabla_i h_{jl})K_l - h_{il}h_{lj}\bar{g}(K,\nu)\\
& = -\overline{\Rm}(e_i,\nu,K,e_j) - \nu\big(\bar{g}(\bar\nabla_i K,e_j)\big) + \bar{g}(\bar\nabla_iK,\bar\nabla_\nu e_j) - \bar{g}(\bar\nabla_{[e_i,\nu]}K,e_j)\\
& \hskip 0.5cm -  h_{ij}\kappa  + \bar{g}(\bar\nabla_i K, \bar\nabla_j \nu) - (\nabla_i h_{jl})K_l - h_{il}h_{lj}\bar{g}(K,\nu).
\end{align*}
Next we take the trace by $\dot{f}^{ij}$ which is symmetric, so by swapping some $e_i$ and $e_j$, and by writing
\[\dot{f}^{ij}\nu\big(\bar{g}(\bar\nabla_i K, e_j)\big) = \frac{1}{2}\dot{f}^{ij}\nu\big(\bar{g}(\bar\nabla_i K, e_j) + \bar{g}(\bar\nabla_j K, e_i)\big),\]
we get:

\begin{align*}
& \dot{f}^{ij}\nabla_i\nabla_j \big(\bar{g}(K,\nu)\big)\\
& = -\dot{f}^{ij}\overline{\Rm}(e_i,\nu,K,e_j) - \frac{1}{2}\dot{f}^{ij}\nu\big(\bar{g}(\bar\nabla_i K, e_j) + \bar{g}(\bar\nabla_j K, e_i)\big)\\
& \hskip 0.5cm +\dot{f}^{ij}\bar{g}(\bar\nabla_jK,\bar\nabla_\nu e_i + \bar\nabla_i\nu) - \dot{f}^{ij}\bar{g}(\bar\nabla_{[e_i,\nu]}K,e_j)\\
& \hskip 0.5cm  - \dot{f}^{ij}h_{ij}\kappa  - \dot{f}^{ij}(\nabla_i h_{jl})K_l - \dot{f}^{ij} (h^2)_{ij}\bar{g}(K,\nu)
\end{align*}
Using the fact that $K$ is a conformal Killing field, so that
\[\bar{g}(\bar\nabla_i K, e_j) + \bar{g}(\bar\nabla_j K, e_i) = 2\kappa\bar{g}(e_i,e_j),\]
and recalling that
\[[\nu, e_i] = \bar\nabla_\nu e_i - \bar\nabla_i\nu,\]
we can proceed our calculation and show: 
\begin{align}
\label{eq:Hess_2}
& \dot{f}^{ij}\nabla_i\nabla_j \big(\bar{g}(K,\nu)\big)\\
& = \dot{f}^{ij}\overline{\Rm}(e_i,\nu,e_j,K) - \dot{f}^{ij}\nu(\kappa \bar{g}(e_i,e_j)) + \dot{f}^{ij}\big(\bar{g}(\bar\nabla_jK, [\nu,e_i]) + \bar{g}(\bar\nabla_{[\nu,e_i]}K,e_j)\big)\nonumber \\
& \hskip 0.5cm  + 2\dot{f}^{ij}\bar{g}(\bar\nabla_jK,\bar\nabla_i\nu) - \dot{f}^{ij}h_{ij}\kappa  - \dot{f}^{ij}(\nabla_i h_{jl})K_l - \dot{f}^{ij} (h^2)_{ij}\bar{g}(K,\nu) \nonumber\\
& = \dot{f}^{ij}\overline{\Rm}(e_i,\nu,e_j,K) - \dot{f}^{ij}g_{ij}\bar\nabla_\nu\kappa + \underbrace{2\dot{f}^{ij}\kappa\bar{g}([\nu,e_i],e_j)}_{\text{(A)}} + \underbrace{2\dot{f}^{ij}\bar{g}(\bar\nabla_jK,\bar\nabla_i\nu)}_{\text{(B)}} \nonumber\\
& \hskip 0.5cm - \dot{f}^{ij}h_{ij}\kappa  - \underbrace{\dot{f}^{ij}(\nabla_i h_{jl})K_l}_{\text{(C)}} - \dot{f}^{ij} (h^2)_{ij}\bar{g}(K,\nu) \nonumber
\end{align}
Now consider the term (A) above:
\[2\dot{f}^{ij}\kappa\bar{g}([\nu,e_i],e_j) = 2\dot{f}^{ij}\kappa \bar{g}(\bar\nabla_\nu e_i, e_j) - 2\dot{f}^{ij}\kappa\bar{g}(\bar\nabla_i \nu, e_j).\]
Note that $\bar{g}(\bar\nabla_\nu e_i, e_j)$ is anti-symmetric whereas $\dot{f}^{ij}$ is symmetric, we have
\[2\dot{f}^{ij}\kappa \bar{g}(\bar\nabla_\nu e_i, e_j) = 0.\]
Recall that $\bar\nabla_i\nu = -h_{ij}e_j$, we get:
\[- 2\dot{f}^{ij}\kappa\bar{g}(\bar\nabla_i \nu, e_j) = 2\dot{f}^{ij}\kappa h_{ij}.\]
This shows
\begin{equation}
\label{eq:term_A}
2\dot{f}^{ij}\kappa\bar{g}([\nu,e_i],e_j) = 2\kappa\dot{f}^{ij}h_{ij}.	
\end{equation}
For term (B), we let $\lambda_i$'s be the principal curvatures, then we have
\begin{align}
\label{eq:term_B}
& 2\dot{f}^{ij}\bar{g}(\bar\nabla_j K, \bar\nabla_i\nu)\\
& = 2\D{f}{\lambda_i}\delta_{ij}\bar{g}(\bar\nabla_j K, -h_{il}e_l) \nonumber\\
& = -2\D{f}{\lambda_i}\lambda_i\bar{g}(\bar\nabla_i K, e_i) \nonumber\\
& = -\sum_i \D{f}{\lambda_i}\lambda_i \cdot 2\kappa\bar{g}(e_i,e_i) & \text{($K$ is a conformal Killing field)} \nonumber\\
& = -2\kappa\dot{f}^{ij}h_{ij} \nonumber
\end{align}
Finally, we handle the term (C) using Codazzi's equation:
\begin{align}
\overline{\Rm}(e_i,e_l,e_j,\nu) & = \nabla_i h_{jl} - \nabla_l h_{ij} \nonumber\\
\label{eq:term_C}\implies \overline{\Rm}(e_i,K^T,e_j,\nu) & = (\nabla_i h_{jl})K_l - (\nabla_l h_{ij})K_l
\end{align}
where $K^T$ denotes the tangential projection of $K$ onto $T\Sigma$.

Substituting \eqref{eq:term_A}, \eqref{eq:term_B} and \eqref{eq:term_C} back into \eqref{eq:Hess_2}, we get
\begin{align}
\label{eq:Hess_3}
& \dot{f}^{ij}\nabla_i\nabla_j\big(\bar{g}(K,\nu)\big)\\
& =	\dot{f}^{ij}\overline{\Rm}(e_i,\nu,e_j,K-K^T) - \dot{f}^{ij}g_{ij}\bar\nabla_\nu\kappa - \dot{f}^{ij}h_{ij}\kappa  - \dot{f}^{ij}(\nabla_l h_{ij})K_l - \dot{f}^{ij} (h^2)_{ij}\bar{g}(K,\nu)\nonumber\\
& = \dot{f}^{ij} \overline{\Rm}(e_i,\nu,e_j,\bar{g}(K,\nu)\nu) - \dot{f}^{ij}g_{ij}\bar\nabla_\nu\kappa - \dot{f}^{ij}h_{ij}\kappa - (\nabla_l f)K_l - \dot{f}^{ij} (h^2)_{ij}\bar{g}(K,\nu)\nonumber\\
& = \dot{f}^{ij}\overline{\Rm}(e_i,\nu,e_j,\nu)\bar{g}(K,\nu) - \dot{f}^{ij}g_{ij}\bar\nabla_\nu\kappa - \dot{f}^{ij}h_{ij}\kappa - \bar{g}(K,\nabla f) - \dot{f}^{ij} (h^2)_{ij}\bar{g}(K,\nu).\nonumber 
\end{align}
The proposition is proved by combining \eqref{eq:ddt} and \eqref{eq:Hess_3}.
\end{proof}

\begin{corollary}
If $(N^{n+1},\bar{g})$ is a space form with constant sectional curvature $k$, then \eqref{eq:support} becomes
\begin{align}
\label{eq:support_space_form}
& \left(\D{}{t} - \dot{f}^{ij}\nabla_i\nabla_j\right)\bar{g}(K,\nu)\\
& = \dot{f}^{ij}g_{ij}\left(\bar\nabla_\nu\kappa + k\bar{g}(K,\nu)\right) + f\kappa  + \dot{f}^{ij}h_{ij}\kappa + \dot{f}^{ij} (h^2)_{ij}\bar{g}(K,\nu). \nonumber
\end{align}
When $K = X = \phi(\rho)\D{}{\rho}$, then we have
\begin{equation}
\label{eq:support_space_form_X}
\left(\D{}{t} - \dot{f}^{ij}\nabla_i\nabla_j\right)\bar{g}(X,\nu) = f\phi  + \dot{f}^{ij}h_{ij}\phi' + \dot{f}^{ij} (h^2)_{ij}\bar{g}(X,\nu)
\end{equation}
\end{corollary}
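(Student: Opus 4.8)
The plan is to obtain both displays of the corollary as direct specializations of the general identity \eqref{eq:support}. For the first part I would use that on a space form of constant sectional curvature $k$ the ambient curvature tensor is $\overline{\Rm}(V,W,Y,Z) = k\big(\bar{g}(V,Z)\bar{g}(W,Y)-\bar{g}(V,Y)\bar{g}(W,Z)\big)$, so that for a tangential orthonormal frame $\{e_i\}$ one gets $\overline{\Rm}(e_i,\nu,e_j,\nu) = -k\,g_{ij}$. The only term of \eqref{eq:support} encoding the ambient geometry is $-\dot{f}^{ij}\overline{\Rm}(e_i,\nu,e_j,\nu)\bar{g}(K,\nu)$, which thereby becomes $k\,\dot{f}^{ij}g_{ij}\,\bar{g}(K,\nu)$; absorbing it into the term $\dot{f}^{ij}g_{ij}\bar\nabla_\nu\kappa$ already present yields \eqref{eq:support_space_form}, all remaining terms being untouched.

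For the second display I would take $K = X = \phi(\rho)\dd{\rho}$, so that $\mathcal{L}_X\bar{g} = 2\phi'\bar{g}$ gives the conformal factor $\kappa = \phi'(\rho)$. The claim then reduces to showing the combined term $\dot{f}^{ij}g_{ij}\big(\bar\nabla_\nu\kappa + k\,\bar{g}(X,\nu)\big)$ vanishes. Since $\rho$ has unit gradient $\bar\nabla\rho = \dd{\rho}$, any radial function $u(\rho)$ satisfies $\bar\nabla_\nu u = u'(\rho)\,\bar{g}(\dd{\rho},\nu)$; applied to $u=\phi'$ this gives $\bar\nabla_\nu\kappa = \phi''(\rho)\,\bar{g}(\dd{\rho},\nu)$, while directly $\bar{g}(X,\nu) = \phi(\rho)\,\bar{g}(\dd{\rho},\nu)$, whence $\bar\nabla_\nu\kappa + k\,\bar{g}(X,\nu) = \big(\phi'' + k\phi\big)\,\bar{g}(\dd{\rho},\nu)$. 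Because the ambient space is a space form of curvature $k$, its radial sectional curvature $-\phi''/\phi$ equals $k$, i.e.\ $\phi'' + k\phi \equiv 0$ (consistent with $\phi = \rho,\sinh\rho,\sin\rho$ for $k = 0,-1,+1$). Hence that term drops, and \eqref{eq:support_space_form} reduces to \eqref{eq:support_space_form_X}.

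As this is a pure substitution there is no genuine difficulty; the only points to handle with care are bookkeeping ones: matching the sign and ordering convention for $\overline{\Rm}$ to the one used in deriving \eqref{eq:support}, so that $\dot{f}^{ij}\overline{\Rm}(e_i,\nu,e_j,\nu)$ contracts to $-k\,\dot{f}^{ij}g_{ij}$, and recording the identity $\phi'' + k\phi = 0$ characterizing the warped-product presentation of a space form of curvature $k$. Both are immediate once noted.
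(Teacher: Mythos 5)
Your proposal is correct and follows essentially the same route as the paper: the curvature term is evaluated via $\overline{\Rm}(e_i,\nu,e_j,\nu)=-k g_{ij}$, and the second display follows from $\kappa=\phi'$, $\bar\nabla_\nu\kappa=\phi''\,\bar g(\partial_\rho,\nu)$, $\bar g(X,\nu)=\phi\,\bar g(\partial_\rho,\nu)$ and the space-form ODE $\phi''+k\phi=0$. Note that your substitution correctly produces $f\kappa=f\phi'$, so the term $f\phi$ printed in \eqref{eq:support_space_form_X} is a typo (it appears correctly as $f\phi'$ in \eqref{eq:box_X}).
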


\begin{proof}
\eqref{eq:support_space_form} follows directly from the fact that
\[\overline{\Rm}(e_i,\nu,e_j,\nu) = k\big(\bar{g}(e_i,\nu)\bar{g}(\nu,e_j) - \bar{g}(e_i,e_j)\bar{g}(\nu,\nu)\big) = -kg_{ij}.\]
To prove \eqref{eq:support_space_form_X}, we write
$\nu = \nu^\rho\D{}{\rho} + \nu^i\D{}{w_i}$ where $(w_1, \cdots, w_n)$ are local coordinates of $\mathbb{S}^n$, then we have $\kappa = \phi'(\rho)$ and so
\begin{align*}
\bar\nabla_\nu\kappa & = \bar\nabla_{\nu^\rho\D{}{\rho} + \nu^i\D{}{z_i}}\phi'(\rho) = \nu^\rho \phi''(\rho)\\
k\bar{g}(X,\nu) & = k\bar{g}\left(\phi(\rho)\D{}{\rho}, \nu^\rho\D{}{\rho} + \nu^i\D{}{z_i}\right)\\
& = k\nu^\rho \phi(\rho).
\end{align*}
Note that $\phi(\rho)$ satisfies the ODE $\phi''(\rho) + k\phi(\rho) = 0$, and so we have
\[\bar\nabla_\nu\kappa + k\bar{g}(X,\nu) = 0,\]
and so \eqref{eq:support_space_form_X} follows directly from \eqref{eq:support_space_form}.
\end{proof}

\begin{remark}
\eqref{eq:support_space_form_X} can also be proved by combining \eqref{eq:ddt}, Lemma 2.6 in Guan-Li's paper \cite{GL15}, and the fact that $\dot{f}^{ij}(\nabla_k h_{ij})X_k = (\nabla_k f)X_k = \bar{g}(X,\nabla f)$. Note that there is a different sign convention of $h_{ij}$ in \cite{GL15}.	
\end{remark}

\section{Uniqueness of self-similar solutions}
This section we first formally define the notation of self-similar solutions in a warped product space $N^{n+1} = I \times M^n$ with metric
\[\bar{g} = d\rho^2 + \phi(\rho)^2 g_M,\]
where $\phi(\rho)$ is a strictly increasing, positive function of $\rho$. As discussed earlier, the conformal vector field
\[X = \phi(\rho)\D{}{\rho}\]
could play the role of the position vector field in Euclidean space. Denote $\psi_t^X : N \to N$ to be the flow map of $X$ in $N$.

Consider the flow of a hypersurface $\Sigma_t \subset N$ defined as in \eqref{eq:flow-perp}:
\begin{equation*}
\left(\D{F}{t}\right)^\perp = f\nu
\end{equation*}
where $f(\lambda_1, \cdots, \lambda_n) : \Gamma \subset \R^n \to \R$ is a $C^1$ function of principal curvatures satisfying the following conditions:
\label{eq:f_conditions}
\begin{enumerate}[(i)]
\item the domain $\Gamma$ of $f$ is a cone in $\R^n$, i.e. whenever $(\lambda_1, \cdots, \lambda_n) \in \Gamma$ and $c > 0$, we have $(c\lambda_1, \cdots, c\lambda_n) \in \Gamma$.
\item $f$ is a homogeneous function of degree $\deg f$, i.e. for any $(\lambda_1, \cdots, \lambda_n) \in \Gamma$ and $c > 0$, we have
\[f(c\lambda_1, \cdots, c\lambda_n) = c^{\deg f}f(\lambda_1, \cdots, \lambda_n).\]
\item $f$ is strictly increasing along each $\lambda_i$, i.e. $\displaystyle{\D{f}{\lambda_i} > 0}$. This is to ensure the flow is ellitpic.
\end{enumerate}
Notable examples of such functions $f$ include:
\begin{itemize}
\item $f = H = \lambda_1 + \cdots + \lambda_n$: mean curvature flow (MCF)
\item $f = -\frac{1}{H} = -\frac{1}{\lambda_1 + \cdots + \lambda_n}$: inverse mean curvature flow (IMCF)
\item $f = K = \lambda_1 \cdots \lambda_n$: Gauss curvature flow (GCF)
\item $f = \sigma_k$, where $\sigma_k$ is the $k$-th symmetric polynomial of $\lambda_i$'s.	
\end{itemize}
Now we state the definition of self-similar solutions in warped product spaces:
\begin{definition}[Self-similar solutions]
$\Sigma_t$ is called a self-similar solution along $X$ under the flow \eqref{eq:flow-perp} if there exists a time function $\tau(t)$ with either $\tau'(t) > 0$ or $\tau'(t) < 0$ for all $t$, such that under the flow \eqref{eq:flow-perp} starting from $\Sigma_0$, the hypersurface evolves by $\Sigma_t := \psi_{\tau(t)}^X \circ \Sigma_0$.

Furthermore, if $\tau'(t) > 0$, we call such a self-similar solution to be an \emph{expander}; whereas if $\tau'(t) < 0$, we call it a \emph{shrinker}.
\end{definition}
Suppose $F_0$ is a local parametrization of $\Sigma_0$, then $F_t = \psi_{\tau(t)}^X \circ F_0$ is a local parametrization of $\Sigma_t$. By differentiating $F_t$ with respect to $t$, we get:
\[\D{F_t}{t} = \D{}{t}\psi_{\tau(t)}^X \circ F_0 = \tau'(t) (X \circ F_t) \implies \left(\D{F_t}{t}\right)^\perp = \tau'\bar{g}(X \circ F_t, \nu_t)\nu_t.\]
Therefore, a self-similar solution satisfies the equation:
\[f = \tau'\bar{g}(X,\nu).\]
When $\bar{g}(X,\nu) \not= 0$, we say the hypersurface $\Sigma_0$ to be star-shaped (with respect to $X$). According to the above equation, a self-similar solution $\Sigma_0$ would be star-shaped if $f \not= 0$ on $\Sigma_0$.

\begin{theorem}
\label{thm:symmetry}
Suppose $\Sigma_t^n$ is a compact star-shaped self-similar solution in $N^{n+1} = I \times M^n$ along $X$ to the flow \eqref{eq:flow-perp}. Suppose $\Sigma_0$, $\phi$, $\tau$, and $f$ satisfy the condition
\begin{equation}
\label{eq:conditions}
\dot{f}^{ij}g_{ij}\frac{\phi''}{\phi} + \tau'(1+\deg f)\phi' \geq 0,
\end{equation}
then $\Sigma_0$ is tangential to any Killing vector field $K$ on $N^{n+1}$ such that $[X,K] = 0$. Furthermore, if $M^n$ is a compact homogeneous space, then $\Sigma_0$ must be a slice $\{\rho_0\} \times M^n$.
\end{theorem}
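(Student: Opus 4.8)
The plan is to establish the first assertion by a maximum principle for the normalised support function
\[w:=\frac{\bar g(K,\nu)}{\bar g(X,\nu)},\]
which is well defined on $\Sigma_0$ since star-shapedness forces $\bar g(X,\nu)$ to be nowhere zero, hence of a fixed sign on the (connected) $\Sigma_0$. First I would record the evolution equations, under the flow \eqref{eq:flow-perp}, of the two support functions $u:=\bar g(K,\nu)$ and $v:=\bar g(X,\nu)$ by specialising \eqref{eq:support}. As $K$ is Killing, $\kappa\equiv 0$, so with $c:=\dot{f}^{ij}(h^2)_{ij}-\dot{f}^{ij}\overline{\Rm}(e_i,\nu,e_j,\nu)$ we get
\[\left(\D{}{t}-\dot{f}^{ij}\nabla_i\nabla_j\right)u=c\,u.\]
For $X=\phi(\rho)\D{}{\rho}$ we have $\kappa=\phi'$ and $\bar\nabla_\nu\phi'=\phi''\,\bar g(\D{}{\rho},\nu)=(\phi''/\phi)\,v$; feeding this, Euler's identity $\dot{f}^{ij}h_{ij}=(\deg f)\,f$, and the self-similar relation $f=\tau' v$ into \eqref{eq:support} collapses the $v$-equation to
\[\left(\D{}{t}-\dot{f}^{ij}\nabla_i\nabla_j\right)v=(c+Q)\,v,\qquad Q:=\dot{f}^{ij}g_{ij}\frac{\phi''}{\phi}+\tau'(1+\deg f)\,\phi',\]
and $Q\ge 0$ is exactly hypothesis \eqref{eq:conditions}. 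A routine quotient computation then yields
\[\D{}{t}w-\dot{f}^{ij}\nabla_i\nabla_j w-\frac{2}{v}\,\dot{f}^{ij}(\nabla_i v)\,\nabla_j w+Q\,w=0 .\]

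To turn this into an elliptic equation on $\Sigma_0$ I would use self-similarity. Writing $\psi:=\psi^X_{\tau(t)}$, the map $\psi$ is a conformal diffeomorphism of $N$ with $\psi_\ast X=X$ and, because $[X,K]=0$, $\psi_\ast K=K$; a short computation then shows $\bar g(K,\nu)$ and $\bar g(X,\nu)$ pick up the \emph{same} conformal factor along the flow $\Sigma_t=\psi(\Sigma_0)$, so $w$ is invariant under the self-similar flow. Comparing the normal gauge of \eqref{eq:flow-perp} with the parametrisation $\psi\circ F_0$, whose velocity has tangential part $\tau'X^{T}$, this invariance reads $\D{}{t}w=-\tau'\nabla_{X^{T}}w$ in the normal gauge; substituting into the $w$-equation above and evaluating at $t=0$ gives, on the closed manifold $\Sigma_0$,
\[\dot{f}^{ij}\nabla_i\nabla_j w+\Big(\frac{2}{v}\dot{f}^{ij}\nabla_i v+\tau'X^{T}\Big)^{\!j}\nabla_j w-Q\,w=0 .\]
Since $(\dot{f}^{ij})$ is positive definite (ellipticity, $\D{f}{\lambda_i}>0$) and the zeroth-order coefficient $-Q$ is $\le 0$, the strong maximum principle forces $w\equiv m$ for a constant $m$, i.e.\ $\bar g(K,\nu)=m\,\bar g(X,\nu)$ on $\Sigma_0$. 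To see $m=0$, note that the compact star-shaped $\Sigma_0$ is a radial graph over $M^n$ and hence bounds a domain $\Omega\subset N$; as $K$ is Killing, $\Div_{\bar g}K=0$ and so $\int_{\Sigma_0}\bar g(K,\nu)=\int_\Omega\Div_{\bar g}K=0$, whereas $\int_{\Sigma_0}\bar g(X,\nu)\ne 0$ because $\bar g(X,\nu)$ has a fixed sign. Hence $m=0$, i.e.\ $K$ is tangent to $\Sigma_0$, which is the first assertion.

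Now suppose $M^n$ is a compact homogeneous space. Every Killing field $L$ of $(M^n,g_M)$ lifts to the vector field $\hat L:=(0,L)$ on $N^{n+1}=I\times M^n$, which one checks directly from $\bar g=d\rho^2+\phi(\rho)^2 g_M$ (using that $\hat L$ is $\rho$-independent and has no $\D{}{\rho}$-component) to be a Killing field of $\bar g$ with $[X,\hat L]=0$. By the first part every such $\hat L$ is tangent to $\Sigma_0$, and since $M^n$ is homogeneous the vectors $\hat L(p)$, with $L$ ranging over Killing fields of $M^n$, span the horizontal subspace $\{0\}\times T_{w_0}M^n$ at each $p=(\rho_0,w_0)\in\Sigma_0$; this subspace is therefore contained in $T_p\Sigma_0$ and, having the same dimension $n$, equals it. Consequently $d\rho$ vanishes on $T\Sigma_0$, so $\rho$ is constant on the connected hypersurface $\Sigma_0$; hence $\Sigma_0\subseteq\{\rho_0\}\times M^n$ and, by compactness and equality of dimensions, $\Sigma_0=\{\rho_0\}\times M^n$.

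The step I expect to be the main obstacle is the passage to the elliptic equation on $\Sigma_0$: one must correctly isolate the tangential velocity $\tau'X^{T}$ carried by the self-similar parametrisation so that the time derivative of $w$ becomes a genuine first-order transport term, and then verify that the resulting linear operator has a nonpositive zeroth-order coefficient, so that the strong maximum principle on the closed $\Sigma_0$ actually yields $w\equiv\textup{const}$ rather than merely an a priori bound. Once $w$ is constant, the only remaining point is the flux comparison --- the vanishing of $\int_{\Sigma_0}\bar g(K,\nu)$ for a Killing $K$ against the nonvanishing, fixed-sign integral $\int_{\Sigma_0}\bar g(X,\nu)$ --- which pins the constant down to zero, and the reduction to a slice is then immediate from homogeneity.
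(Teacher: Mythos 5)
Your proposal is correct and, for the main body of the argument, follows the paper's own route: the specialisations of \eqref{eq:support} to $u=\bar g(K,\nu)$ and $v=\bar g(X,\nu)$, the quotient $w=u/v$, the conversion of $\D{}{t}w$ into the transport term $-\tau'\nabla_{X^T}w$ via the invariance of $w$ under the conformal self-similar flow, and the concluding span-of-lifted-Killing-fields argument are all exactly the paper's steps. Where you genuinely diverge is the endgame. The paper evaluates the elliptic equation at an interior maximum (resp.\ minimum) of $w$ and reads off $\max w\le 0\le \min w$ directly from the sign of the zeroth-order coefficient $-Q$, where $Q=\dot f^{ij}g_{ij}\phi''/\phi+\tau'(1+\deg f)\phi'$; this is a purely pointwise, weak-maximum-principle argument with no global input. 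You instead apply the strong maximum principle to conclude $w\equiv m$ and then pin down $m=0$ by the flux identity $\int_{\Sigma_0}\bar g(K,\nu)=0$ (Killing fields are divergence-free) against $\int_{\Sigma_0}\bar g(X,\nu)\ne 0$. Your version costs an extra global step but is more robust exactly where the pointwise argument is weakest: if $Q$ vanishes at the extremum the paper's displayed inequality gives no sign information on $w$ there, and indeed $Q\equiv 0$ in the flagship case $\deg f=-1$ with $\phi''=0$ (Euclidean space), where one only gets $w$ constant and some normalisation such as yours is needed. Two points you should tighten: (i) the divergence theorem needs $\Sigma_0$ to bound a compact region; when $I$ has no compact lower end, integrate instead over the region between $\Sigma_0$ and a slice $\{\rho_1\}\times M^n$ beneath it and note that the lifted Killing fields --- the only ones required for the slice conclusion --- are tangent to slices, so the extra boundary contributes no flux; (ii) the assertion that a compact star-shaped hypersurface is a radial graph uses that the projection to $M^n$ is a covering map which is one-sheeted for an embedded hypersurface, which deserves a sentence since the paper allows non-simply-connected factors such as compact hyperbolic quotients.
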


\begin{proof}
Consider the flow map $\psi_t^X$ generated by $X$. For simplicity, we denote it by $\psi_t$. Suppose $F_0(u_1,\cdots,u_n)$ is a local parametrization of $\Sigma_0$, then $F_t = \psi_{\tau(t)} \circ F_0$ is the local parametrization of $\Sigma_t$, so that we have
\[\D{F_t}{u_i} = (\psi_{\tau(t)})_*\left(\D{F_0}{u_i}\right).\]
Since the flow map $\psi_{\tau(t)}$ is conformal, $\nu_t$ and $\big(\psi_{\tau(t)}\big)_*\nu_0$ points at the same direction. By renormalization, we must have (up to a sign),
\[\nu_t = \dfrac{\big(\psi_{\tau(t)}\big)_*\nu_0}{\sqrt{\bar{g}\big(\big(\psi_{\tau(t)}\big)_*\nu_0,\big(\psi_{\tau(t)}\big)_*\nu_0\big)}}.\]
It is given that $\mathcal{L}_X K = [X,K] = 0$, so we have for any $t$,
\[\big(\psi_{\tau(t)}\big)_*K = K \circ \psi_{\tau(t)}.\]
This shows
\[\bar{g}(K \circ F_t,\nu_t) = \frac{\bar{g}(K \circ \psi_{\tau(t)} \circ F_0, \big(\psi_{\tau(t)}\big)_*\nu_0)}{\sqrt{\bar{g}\big(\big(\psi_{\tau(t)}\big)_*\nu_0,\big(\psi_{\tau(t)}\big)_*\nu_0\big)}} = \frac{\big(\psi_{\tau(t)}\big)^*\bar{g}(K \circ F_0, \nu_0)}{\sqrt{\big(\psi_{\tau(t)}\big)^*\bar{g}\big(\nu_0,\nu_0\big)}}.\]
Similarly, we also have
\[\bar{g}(X \circ F_t,\nu_t) = \frac{\big(\psi_{\tau(t)}\big)^*\bar{g}(X \circ F_0, \nu_0)}{\sqrt{\big(\psi_{\tau(t)}\big)^*\bar{g}\big(\nu_0,\nu_0\big)}}.\]
Since $\psi_{\tau(t)}^X$ is a conformal map, $(\psi_{\tau(t)}^X)^*\bar{g}$ is given by a $e^{\varphi_t}\bar{g}$ for some scalar function $\varphi_t$. We can conclude that
\[\frac{\bar{g}(K \circ F_t,\nu_t)}{\bar{g}(X \circ F_t,\nu_t)} = \frac{\bar{g}(K \circ F_0, \nu_0)}{\bar{g}(X \circ F_0, \nu_0)},\]
and hence
\[\D{}{t}\frac{\bar{g}(K \circ F_t,\nu_t)}{\bar{g}(X \circ F_t,\nu_t)} = 0.\]
Under a tangential diffeomorphism $\Phi_t : \Sigma_t \to \Sigma_t$, one can convert \eqref{eq:flow-perp} into $\widetilde{F}_t := F_t \circ \Phi_t$ so that $\widetilde{\nu}_t = \nu_t \circ \Phi_t$, and
\[\D{\widetilde{F}}{t} = f\widetilde\nu.\]
Then, we have
\[\frac{\bar{g}(K \circ \widetilde{F}_t, \widetilde\nu_t)}{\bar{g}(X \circ \widetilde{F}_t,\widetilde\nu_t)} = \frac{\bar{g}(K \circ F_t,\nu_t)}{\bar{g}(X \circ F_t,\nu_t)} \circ \Phi_t,\]
and so
\[\D{}{t}\left(\frac{\bar{g}(K \circ \widetilde{F}_t,\widetilde\nu_t)}{\bar{g}(X \circ \widetilde{F}_t,\widetilde\nu_t)}\right) = g\left(\D{\Phi_t}{t}, \nabla\left(\frac{\bar{g}(K \circ F_t,\nu_t)}{\bar{g}(X \circ F_t,\nu_t)}\right)\right)\]
Now abbreviate $\bar{g}(K \circ \widetilde{F}_t, \widetilde\nu_t)$ by simply $\bar{g}(K,\nu)$, and similarly for $\bar{g}(X \circ \widetilde{F}_t,\widetilde\nu_t)$. By \eqref{eq:support}, we have
\begin{align}
\label{eq:box_K}
\left(\D{}{t} - \dot{f}^{ij}\nabla_i\nabla_j\right)\bar{g}(K,\nu) & =  -\dot{f}^{ij}\overline{\Rm}(e_i,\nu,e_j,\nu)\bar{g}(K,\nu) + \dot{f}^{ij} (h^2)_{ij}\bar{g}(K,\nu)\\
\label{eq:box_X}
\left(\D{}{t} - \dot{f}^{ij}\nabla_i\nabla_j\right)\bar{g}(X,\nu) & = \dot{f}^{ij}g_{ij}\bar\nabla_\nu\phi' -\dot{f}^{ij}\overline{\Rm}(e_i,\nu,e_j,\nu)\bar{g}(X,\nu)\\
& \hskip 0.5cm + f\phi'  + \dot{f}^{ij}h_{ij}\phi' + \dot{f}^{ij} (h^2)_{ij}\bar{g}(X,\nu)\nonumber\\
& = \dot{f}^{ij}g_{ij}\phi''d\rho(\nu)-\dot{f}^{ij}\overline{\Rm}(e_i,\nu,e_j,\nu)\bar{g}(X,\nu) + (1 + \deg f)f\phi'\nonumber\\
& \hskip 0.5cm + \dot{f}^{ij} (h^2)_{ij}\bar{g}(X,\nu)\nonumber\\
& = \dot{f}^{ij}g_{ij}\frac{\phi''}{\phi}\bar{g}(X,\nu)-\dot{f}^{ij}\overline{\Rm}(e_i,\nu,e_j,\nu)\bar{g}(X,\nu)\nonumber\\
& \hskip 0.5cm + (1 + \deg f)f\phi' + \dot{f}^{ij} (h^2)_{ij}\bar{g}(X,\nu) \nonumber
\end{align}
Here we have used the fact that for a homogeneous function $f$, we have
\[\dot{f}^{ij}h_{ij} = (\deg f)f.\]
Recall that for a self-similar solution, we have:
\begin{equation}
\label{eq:soliton}
f = \tau'\bar{g}(X,\nu).
\end{equation}
Since it is given that $\tau' \not= 0$ and $f \not= 0$, we can consider the quotient the quotient $\dfrac{\bar{g}(K,\nu)}{\bar{g}(X,\nu)}$. By direct computations, we first have:
\begin{align}
\label{eq:box_quotient}
& \left(\D{}{t} - \dot{f}^{ij}\nabla_i\nabla_j\right)\frac{\bar{g}(K,\nu)}{\bar{g}(X,\nu)}\\
& = \frac{\bar{g}(X,\nu)\left(\D{}{t}-\dot{f}^{ij}\nabla_i\nabla_j\right)(\bar{g}(K,\nu))-\bar{g}(K,\nu)\left(\D{}{t}-\dot{f}^{ij}\nabla_i\nabla_j\right)(\bar{g}(X,\nu))}{\bar{g}(X,\nu)^2} \nonumber\\
& \hskip 0.5cm + 2\dot{f}^{ij}\nabla_i \log \bar{g}(X,\nu) \cdot \nabla_j \left(\frac{\bar{g}(K,\nu)}{\bar{g}(X,\nu)}\right) \nonumber
\end{align}
Consider the numerator of term, by using \eqref{eq:box_K} and \eqref{eq:box_X} we get:
\begin{align*}
& \bar{g}(X,\nu)\left(\D{}{t}-\dot{f}^{ij}\nabla_i\nabla_j\right)(\bar{g}(K,\nu))-\bar{g}(K,\nu)\left(\D{}{t}-\dot{f}^{ij}\nabla_i\nabla_j\right)(\bar{g}(X,\nu))\\
& = \bar{g}(X,\nu)\left(-\dot{f}^{ij}\overline{\Rm}(e_i,\nu,e_j,\nu)\bar{g}(K,\nu) + \dot{f}^{ij} (h^2)_{ij}\bar{g}(K,\nu)\right)\\
& \hskip 0.5cm -\bar{g}(K,\nu)\left(\dot{f}^{ij}g_{ij}\frac{\phi''}{\phi}\bar{g}(X,\nu)-\dot{f}^{ij}\overline{\Rm}(e_i,\nu,e_j,\nu)\bar{g}(X,\nu) + (1 + \deg f)f\phi' + \dot{f}^{ij} (h^2)_{ij}\bar{g}(K,\nu)\right)\\
& = -\bar{g}(K,\nu)\left(\dot{f}^{ij}g_{ij}\frac{\phi''}{\phi}\bar{g}(X,\nu)+(1 + \deg f)f\phi'\right)
\end{align*}
Therefore, putting it back to \eqref{eq:box_quotient}, we get:
\begin{align*}
-\dot{f}^{ij}\nabla_i\nabla_j\left(\frac{\bar{g}(K,\nu)}{\bar{g}(X,\nu)}\right) = -\dot{f}^{ij}g_{ij}\frac{\phi''}{\phi}\frac{\bar{g}(K,\nu)}{\bar{g}(X,\nu)} - \frac{\bar{g}(K,\nu)}{\bar{g}(X,\nu)^2}(1+\deg f)f\phi' + U^i \nabla_i\left(\frac{\bar{g}(K,\nu)}{\bar{g}(X,\nu)}\right)
\end{align*}
for some tangent vector field $U = U^i e_i$. Using \eqref{eq:soliton}, the following holds $\Sigma_0$:
\[-\dot{f}^{ij}\nabla_i\nabla_j\left(\frac{\bar{g}(K,\nu)}{\bar{g}(X,\nu)}\right) = -\left(\dot{f}^{ij}g_{ij}\frac{\phi''}{\phi}+(1+\deg f)\tau'\phi'\right)\frac{\bar{g}(K,\nu)}{\bar{g}(X,\nu)}+ U^i \nabla_i\left(\frac{\bar{g}(K,\nu)}{\bar{g}(X,\nu)}\right).\]
By the maximum principle (note that $\Sigma_0$ is compact), at the point achieving the maximum of $\dfrac{\bar{g}(K,\nu)}{\bar{g}(X,\nu)}$, we have
\[\underbrace{-\dot{f}^{ij}\nabla_i\nabla_j\left(\frac{\bar{g}(K,\nu)}{\bar{g}(X,\nu)}\right)}_{\geq 0} = \underbrace{-\left(\dot{f}^{ij}g_{ij}\frac{\phi''}{\phi}+(1+\deg f)\tau'\phi'\right)}_{\leq 0 \text{ by \eqref{eq:conditions}}}\frac{\bar{g}(K,\nu)}{\bar{g}(X,\nu)} + \underbrace{U^i \nabla_i \left(\frac{\bar{g}(K,\nu)}{\bar{g}(X,\nu)}\right)}_{=0}.\]
This implies $\displaystyle{\max_{\Sigma_0}\frac{\bar{g}(K,\nu)}{\bar{g}(X,\nu)} \leq 0}$. By a similar argument, we also have $\displaystyle{\min_{\Sigma_0}\frac{\bar{g}(K,\nu)}{\bar{g}(X,\nu)} \geq 0}$. This proves $\bar{g}(K,\nu) = 0$ on $\Sigma_0$. Hence, this shows $K$ is a tangent vector field to $\Sigma_0$, completing the proof that $\Sigma_0$ is invariant under the flow map $\psi_t^K$.

Now we further assume $M^n$ is a compact homogeneous space such as the sphere $\mathbb{S}^n$, or compact quotients of the hyperbolic space $\mathbb{H}^n$, then the Lie algebra of Killing vector fields act transitively on $M^n$, and they span the tangent space of $M^n$ at each point.

Take any Killing vector field $K$ in $M^n$. It lifts up to a vector field $\widetilde{K}$ on the product space $I \times M^n$. We argue that $\widetilde{K}$ is also a Killing vector field on $N^{n+1}$, and that $[X, \widetilde{K}] = 0$.

Express $M^n$ in local coordinates $(w_1, \cdots, w_n)$, and write $K$ in local coordinates
\[K = K^i(w_1,\cdots,w_n) \D{}{w_i} \in TM^n,\]
then $(\rho, w_1, \cdots, w_n)$ can be taken to be local coordinates of $N^{n+1} = I \times M^n$, and we can still write the lifting $\widetilde{K}$ as
\[\widetilde{K} = K^i(w_1,\cdots,w_n) \D{}{w_i} \in TN^{n+1}.\]
To prove that $\widetilde{K}$ is still a Killing vector field, we consider
\begin{align*}
\mathcal{L}_{\widetilde{K}}d\rho & = di_{\widetilde{K}}d\rho = d(d\rho(\widetilde{K})) = 0\\
\implies \mathcal{L}_{\widetilde{K}}\bar{g} & = \mathcal{L}_{\widetilde{K}}\big(d\rho^2 + \phi(\rho)^2 g_M\big)\\
& = 0 + \underbrace{\phi(\rho)^2}_{\textup{indep of $w_i$'s}}\mathcal{L}_{\widetilde{K}}g_M\\
& = 0.
\end{align*}
Also, it can also be checked that $[X, \widetilde{K}] = 0$, since
\[[X, \widetilde{K}] = \phi(\rho)\D{K^i}{\rho} - K^i\D{\phi(\rho)}{w_i} = 0.\]
Therefore, any Killing vector field $K$ on $M^n$ lifts up to a Killing vector field $\widetilde{K}$ on $N^{n+1}$ which is tangential to $\Sigma_0$ according to the result we proved. For $M^n$ being a homogeneous space, at every point $(\rho_0, x) \in \Sigma_0 \subset I \times M^n$, the tangent space $T_{(\rho_0,x)}(\{\rho_0\} \times M^n) \cong T_xM^n$ is spanned by (lifted) Killing vector fields on $M^n$, which were already shown to be tangential to $\Sigma_0$. This proves that $T_{(\rho_0,x)}\Sigma_0 = T_{(\rho_0,x)}(\{\rho_0\} \times M^n)$, and so $T_{(\rho_0,x)}\Sigma_0$ is orthogonal to $\D{}{\rho}$. It proves that $\Sigma_0$ is the slice $\{\rho_0\} \times M^n$.

\end{proof}

Here are many examples in which conditions \eqref{eq:conditions} hold. Notably, when $f$ is of homogeneous degree $-1$, the conditions \eqref{eq:conditions} simply require $\phi'' \geq 0$. Note that $\phi > 0$ is always assumed.

For Euclidean spaces $\R^{n+1}$ where $\phi(\rho) = \rho$, it holds true. This recovers the uniqueness result of self-similar solutions of IMCF in Euclidean spaces proved by Drugan-Lee-Wheeler \cite{DLW16}, of the $(\sigma_k/\sigma_j)^{1/(j-k)}$-flow in Euclidean spaces by Kwong-Lee-Pyo \cite{KLP18}, and the general result for any parabolic flow by degree $(-1)$ homogeneous curvature in Euclidean spaces in the joint work \cite{CCF21} by the author. 

The condition $\phi''\geq 0$ also holds true for hyperbolic spaces $\mathbb{H}^{n+1}$ where $\phi(\rho) = \sinh\rho$, so that $\phi''/\phi \equiv 1$. Furthermore, the anti-deSitter-Schwarzschild space of mass $m \geq 0$ is of the form $[s_0,\infty) \times \mathbb{S}^n$ with the metric
\[\frac{1}{\omega(s)}ds^2 + s^2 g_{\mathbb{S}^n}\]
where $\omega(s) = 1 - ms^{1-n} + s^2$. One can verify that it can be rewritten in the warped product form as
\[d\rho^2 + \phi(\rho)^2g_{\mathbb{S}^n}\]
so that $\phi'(\rho) = \sqrt{\omega(s)}$ and $\phi''(\rho) = \frac{1}{2}\omega'(s)$. This shows condition the $\phi'' > 0$ holds true for anti-deSitter-Schwarzschild's space, since
\[\phi''(\rho) = \frac{1}{2}(2s + m(n-1)s^{-n}) > 0.\]
Note that when $m = 0$, the anti-deSitter-Schwarzschild metric becomes the hyperbolic metric. To summarize, we have obtained the following new results:

\begin{corollary}
\label{cor:uniqueness}
When $N^{n+1} = I \times M^n$ is a warped product by a compact homogeneous space $(M^n, g_M)$ with the metric $\bar{g} = d\rho^2 + \phi(\rho)^2 g_M$ such that $\phi''(\rho) \geq 0$, the only compact star-shaped self-similar solution in $N$ to the flow \eqref{eq:flow-perp}, with the speed function $f$ satisfying conditions (i)-(iii) in p.\pageref{eq:f_conditions} with $\deg f = -1$, must be a slice $\{\rho_0\} \times M^n$. In particular, this is true for anti-deSitter-Schwarzschild spaces of mass $m \geq 0$, including the hyperbolic space $\mathbb{H}^{n+1}$ when $m = 0$. 
\end{corollary}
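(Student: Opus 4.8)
The plan is to obtain the corollary as a direct specialization of Theorem \ref{thm:symmetry}, so the only work is to verify that its hypotheses hold in the stated setting. The crucial point is that when $\deg f = -1$ the structural assumption \eqref{eq:conditions} collapses: the term $\tau'(1+\deg f)\phi'$ vanishes identically, so \eqref{eq:conditions} is equivalent to the single pointwise inequality
\[\dot{f}^{ij}g_{ij}\,\frac{\phi''}{\phi} \geq 0.\]
Condition (iii) on the speed function, $\D{f}{\lambda_i} > 0$ for every $i$, means that $\dot{f}^{ij}$ (viewing $f$ as a function of the second fundamental form) is positive definite as a symmetric $2$-tensor, so its trace $\dot{f}^{ij}g_{ij}$ against the induced metric is strictly positive. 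Since $\phi > 0$ is always assumed and $\phi'' \geq 0$ by hypothesis, the displayed inequality holds on all of $\Sigma_0$. Theorem \ref{thm:symmetry} then shows every lifted Killing field of $M^n$ is tangent to $\Sigma_0$, and because $M^n$ is a compact homogeneous space these fields span each tangent space $T_xM^n$; hence $T\Sigma_0$ agrees with the tangent distribution of the slices and $\Sigma_0 = \{\rho_0\}\times M^n$.

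For the claim about anti-deSitter-Schwarzschild space it remains only to put the metric $\omega(s)^{-1}ds^2 + s^2 g_{\mathbb{S}^n}$, $\omega(s) = 1 - ms^{1-n} + s^2$, into the warped product form \eqref{eq:warp} and check $\phi'' \geq 0$. Introducing $\rho$ by $d\rho/ds = \omega(s)^{-1/2}$ gives $\phi(\rho) = s$, hence $\phi'(\rho) = \sqrt{\omega(s)}$ and $\phi''(\rho) = \tfrac{1}{2}\omega'(s) = s + \tfrac{m(n-1)}{2}s^{-n}$, which is strictly positive for $s > s_0 > 0$ whenever $m \geq 0$ and $n \geq 2$. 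Since $\mathbb{S}^n$ is compact homogeneous, the first paragraph applies; the case $m = 0$ reduces $\omega$ to $1 + s^2$ and the ambient space to $\mathbb{H}^{n+1}$ (with $\phi(\rho) = \sinh\rho$, $\phi''/\phi \equiv 1$), so the hyperbolic case is included.

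I do not expect a genuine obstacle here: the corollary is a verification of hypotheses for an already-proved theorem, and the only non-formal input, the sign of $\omega'$, is elementary. The single point I would state carefully is the strict positivity of $\dot{f}^{ij}g_{ij}$, since it is this --- rather than any curvature or Ricci lower bound on $M^n$, which Theorem \ref{thm:symmetry} does not require --- that makes \eqref{eq:conditions} automatic once $\deg f = -1$ and $\phi'' \geq 0$. No further hypothesis on $M^n$ beyond homogeneity enters anywhere in the argument.
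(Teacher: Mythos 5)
Your proposal is correct and follows essentially the same route as the paper: the paper likewise observes that for $\deg f = -1$ the term $\tau'(1+\deg f)\phi'$ in \eqref{eq:conditions} vanishes, so the condition reduces to $\dot{f}^{ij}g_{ij}\phi''/\phi \geq 0$ (automatic from parabolicity and $\phi > 0$, $\phi'' \geq 0$), and then verifies $\phi''(\rho) = \tfrac{1}{2}\omega'(s) = s + \tfrac{m(n-1)}{2}s^{-n} > 0$ for the anti-deSitter-Schwarzschild family before invoking Theorem \ref{thm:symmetry}. Your explicit justification of the strict positivity of $\dot{f}^{ij}g_{ij}$ from condition (iii) is a point the paper leaves implicit, but it is the same argument.
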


There are some other spaces in general relativity which are of the warped product form, with $\omega(s) = 1 - ms^{1-n} - \kappa s^2$ for deSitter-Schwarzschild space, and $\omega(s) = 1 - ms^{1-n} + q^2 s^{2-2n}$ for the Reissner-Nordstrom's space. The condition $\phi''(\rho) = \frac{1}{2}\omega'(s) > 0$ holds only for part of the region when $s$ is sufficiently small, hence a similar uniqueness result as Corollary \eqref{cor:uniqueness} would hold only $\Sigma_0$ is in this region.

For curvature flows with $\deg f > -1$, \eqref{eq:conditions} holds when $\tau' > 0$, $\phi' > 0$, and $\phi'' > 0$. These are still true $\Sigma_0$ is an expanding self-similar solution in hyperbolic space and anti-deSitter-Schwarzschild spaces. Let's state it as another corollary:

\newpage
\begin{corollary}
\label{cor:uniqueness_deg>-1}
When $N^{n+1} = I \times M^n$ is a warped product by a compact homogeneous space $(M^n, g_M)$ with the metric $\bar{g} = d\rho^2 + \phi(\rho)^2 g_M$ such that $\phi'(\rho) \geq 0$ and $\phi''(\rho) \geq 0$, the only compact star-shaped self-similar solution in $N$ to the flow \eqref{eq:flow-perp}, with the speed function $f$ satisfying conditions (i)-(iii) in p.\pageref{eq:f_conditions} with $\deg f > -1$, must be a slice $\{\rho_0\} \times M^n$. In particular, this is true for anti-deSitter-Schwarzschild spaces of mass $m \geq 0$, including the hyperbolic space $\mathbb{H}^{n+1}$ when $m = 0$. 
\end{corollary}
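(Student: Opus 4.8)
The plan is to obtain this as an immediate consequence of Theorem~\ref{thm:symmetry}: all that is needed is to check that the hypotheses $\phi' \geq 0$ and $\phi'' \geq 0$ (together with $\deg f > -1$) force the inequality~\eqref{eq:conditions}. Note that for $\deg f > -1$ the relevant regime is that of expanders, so we have $\tau' > 0$ (cf. Theorem~\ref{thm:main1}). We must then verify
\[
\dot{f}^{ij}g_{ij}\frac{\phi''}{\phi} + \tau'(1+\deg f)\phi' \geq 0 .
\]
First I would pass to an orthonormal frame diagonalizing the second fundamental form, in which $\dot{f}^{ij}g_{ij} = \sum_{i=1}^n \D{f}{\lambda_i}$; this is strictly positive by condition~(iii) on $f$. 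Since $\phi > 0$ is standing and $\phi'' \geq 0$ by hypothesis, the first summand is $\geq 0$. For the second summand, $\deg f > -1$ gives $1 + \deg f > 0$, while $\tau' > 0$ for an expander and $\phi' \geq 0$ by hypothesis; hence $\tau'(1+\deg f)\phi' \geq 0$. Adding the two, \eqref{eq:conditions} holds, so Theorem~\ref{thm:symmetry} applies and $\Sigma_0$ must be a slice $\{\rho_0\} \times M^n$.

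It then remains to verify the ``in particular'' assertion. Writing the anti-deSitter-Schwarzschild space of mass $m \geq 0$ in warped-product form $d\rho^2 + \phi(\rho)^2 g_{\mathbb{S}^n}$ as before, one has $\phi'(\rho) = \sqrt{\omega(s)}$ with $\omega(s) = 1 - ms^{1-n} + s^2$. Since $\omega(s) > 0$ for all $s$ past the horizon (with $\omega(s_0) = 0$ when $m > 0$, and $\omega > 0$ everywhere when $m = 0$), we get $\phi'(\rho) = \sqrt{\omega(s)} \geq 0$; moreover $\phi''(\rho) = \frac{1}{2}\omega'(s) = s + \frac{(n-1)m}{2s^{n}} > 0$, exactly the computation preceding Corollary~\ref{cor:uniqueness}. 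Thus both $\phi' \geq 0$ and $\phi'' \geq 0$ hold, and the hyperbolic space $\mathbb{H}^{n+1}$ (the case $m = 0$, $\phi = \sinh\rho$, $\phi' = \cosh\rho$, $\phi'' = \sinh\rho$) is included.

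There is no essential obstacle here beyond signs bookkeeping; the one point deserving attention is \emph{why} the hypotheses differ from those of Corollary~\ref{cor:uniqueness}. When $\deg f = -1$ the term $\tau'(1+\deg f)\phi'$ vanishes identically, so \eqref{eq:conditions} collapses to $\dot{f}^{ij}g_{ij}\phi''/\phi \geq 0$, and neither the sign of $\phi'$ nor the direction $\tau'$ of the self-similar motion is relevant. For $\deg f > -1$ that term is genuinely present, so to control its sign using only $\phi' \geq 0$ one is forced to restrict to expanders $\tau' > 0$; this is precisely why $\phi' \geq 0$ (and not merely $\phi'' \geq 0$) must be assumed in the statement.
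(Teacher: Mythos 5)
Your proposal is correct and follows essentially the same route as the paper, which likewise obtains this corollary by observing that the hypotheses $\tau'>0$, $\phi'\geq 0$, $\phi''\geq 0$ together with $1+\deg f>0$ and the ellipticity condition $\D{f}{\lambda_i}>0$ make each summand of \eqref{eq:conditions} nonnegative, and then invokes Theorem~\ref{thm:symmetry} plus the computation of $\phi'$ and $\phi''$ for the anti-deSitter-Schwarzschild warping function. Your closing remark correctly pinpoints why the expander restriction (implicit in the corollary via Theorem~\ref{thm:main1}) and the extra hypothesis $\phi'\geq 0$ enter only when $\deg f>-1$.
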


\section{Symmetry of non-compact self-similar solutions in doubly-warped spaces}
In this section, we discuss some rigidity results of non-compact self-similar solutions. The application of the maximum principle is not as straight-forward as in the compact case. Yet, using some similar idea as in \cite{FM19}, one can still prove some rotational symmetry results for self-similar solutions asymptotic to a ``cone'' in some doubly-warped product spaces.

Now further assume the factor $(M^n,g_M)$ in $N^{n+1} = I \times M^n$ is itself a warped product space, i.e. $M^n = J \times S^{n-1}$ where $J$ is an interval, and $(S^{n-1}, g_S)$ is an $(n-1)$-dimensional Riemannian manifold, so that $g_M$ takes the form:
\[g_M = d\theta^2 + r(\theta)^2g_S,\]
where $\theta \in J$, and $r(\theta)$ is a strictly increasing function on $J$.

As such, $N^{n+1}$ can be regarded as a doubly-warped product space, in a sense that $N^{n+1} = I \times J \times S^{n-1}$, and its metric is given by
\[\bar{g} = d\rho^2 + \phi(\rho)^2 \, d\theta^2 + \phi(\rho)^2 r(\theta)^2 g_S.\]
The subset $\mathcal{C}(\theta_0)$, with $\theta_0$ fixed, defined by
\[\mathcal{C}(\theta_0) = \{(\rho, \theta_0, w) \in N : \rho \in I, w \in S\}\]
is called a ``cone'' with angle $\theta_0$. By straight-forward computation, the principal curvatures of $\mathcal{C}(\theta_0)$ are given by
\[\left\{0, \frac{1}{\phi(\rho)}\frac{r'(\theta_0)}{r(\theta_0)}, \cdots, \frac{1}{\phi(\rho)}\frac{r'(\theta_0)}{r(\theta_0)}\right\}.\]
If a complete non-compact hypersurface $\Sigma$ has an end that is sufficiently close, say in $C^2$-sense, to $\mathcal{C}(\theta_0)$ when $\rho$ is large, then it is expected that the principal curvatures of $\Sigma$ would be close to those of $\mathcal{C}(\theta_0)$, and that $\Sigma$ should be approximately as symmetric as $\mathcal{C}(\theta_0)$. Using this idea, we could then prove the some symmetry results about non-compact self-similar solutions. We first derive a proposition in a general setting, and will apply it to prove some symmetry results for some specific warped product spaces and curvature flows.

\begin{proposition}
\label{prop:non-compact}
Let $N^{n+1} = [0,\infty) \times J \times S^{n-1}$ be a doubly-warped product space, with metric of the form
\[\bar{g} = d\rho^2 + \phi(\rho)^2\,d\theta^2 + \phi(\rho)^2 r(\theta)^2 g_S\]
where $\rho \in [0,\infty)$, $\theta \in J$, and $g_S$ is a Riemannian metric on $g_S$. Assume for each $\rho_0 > 0$
\[N \cap \{\rho \leq \rho_0\} = \{(\rho, \theta, w) : \rho \leq \rho_0, \theta \in J, w \in S\}\]
is compact. Suppose $\Sigma_t^n$ is a complete non-compact star-sharped self-similar solution on $(N^{n+1},\bar{g})$ evolving along the flow \eqref{eq:flow-perp}
\begin{equation*}
\left(\D{F}{t}\right)^\perp = f\nu
\end{equation*}
where $f(\lambda_1,\cdots,\lambda_n)$ is a $C^1$ function satisfying (i)-(iii) in p.\pageref{eq:f_conditions}. Assume there exists $\varepsilon > 0$ such that $\Sigma_0$, $\phi$, $\tau$, and $f$ satisfies the condition
\begin{equation}
\label{eq:condition_epsilon}
\left(f\phi'(1+\deg f) + \dot{f}^{ij}g_{ij}\frac{\phi''f}{\phi\tau'}\right) > \varepsilon\Big(\dot{f}^{ij}(h^2)_{ij} + \dot{f}^{ij}\overline{\Rm}(e_i,\nu,\nu,e_j) - \tau'\phi'\Big).
\end{equation}
Then, if a Killing vector field $K$ on $N^{n+1}$ satisfies $[X,K] = 0$ and
\begin{equation}
\label{eq:asymptotics}
\lim_{\rho_0 \to +\infty} \sup_{\Sigma_0 \cap \{\rho > \rho_0\}} \abs{\bar{g}(K,\nu)} = 0,
\end{equation}
then $\Sigma_0$ is tangential to $K$.
\end{proposition}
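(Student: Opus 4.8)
Here is the approach I would take.

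The plan is to adapt the maximum-principle argument of Theorem~\ref{thm:symmetry} to the non-compact setting, using the positive support function $\bar{g}(X,\nu)$ as a barrier and the decay hypothesis~\eqref{eq:asymptotics} to confine the relevant extremum to a compact region. As in Theorem~\ref{thm:symmetry}, since $[X,K]=0$ the ratio $\bar{g}(K,\nu)/\bar{g}(X,\nu)$ is transported along the (tangentially reparametrized) self-similar flow, so it suffices to argue on the fixed profile $\Sigma_0$; there~\eqref{eq:box_K} and~\eqref{eq:box_X} give the evolution equations of $w:=\bar{g}(K,\nu)$ and $v:=\bar{g}(X,\nu)$, which share the same potential $A:=\dot{f}^{ij}(h^2)_{ij}+\dot{f}^{ij}\overline{\Rm}(e_i,\nu,\nu,e_j)$, the $v$-equation carrying in addition the term $c\,v$ with $c:=\dot{f}^{ij}g_{ij}\frac{\phi''}{\phi}+(1+\deg f)\tau'\phi'$, once the soliton identity~\eqref{eq:soliton} and the homogeneity relation $\dot{f}^{ij}h_{ij}=(\deg f)f$ are used; one checks that the three groups of terms in~\eqref{eq:condition_epsilon} are exactly $c\,v$ on the left and $\varepsilon(A-\tau'\phi')$ on the right.

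Next, fix the $\varepsilon>0$ from~\eqref{eq:condition_epsilon}, orient $\nu$ so that $v>0$ on $\Sigma_0$, and set $\eta_{\pm}:=\pm\bar{g}(K,\nu)-\varepsilon\,\bar{g}(X,\nu)$. Because $v>0$ and $w\to 0$ uniformly as $\rho\to\infty$ by~\eqref{eq:asymptotics}, we get $\limsup_{\rho\to\infty}\eta_{\pm}\le 0$; since each slab $\Sigma_0\cap\{\rho\le\rho_0\}$ is compact, either $\eta_+\le 0$ on all of $\Sigma_0$ or $\sup_{\Sigma_0}\eta_+>0$ is attained at an interior point $p$. At such a $p$ one has $\nabla\eta_+=0$ and $\dot{f}^{ij}\nabla_i\nabla_j\eta_+\le 0$; combining the $w$- and $v$-equations shows $\bigl(\partial_t-\dot{f}^{ij}\nabla_i\nabla_j\bigr)\eta_+=A\eta_+-\varepsilon c\,v$, while the conformal scaling of the self-similar flow gives $\partial_t\eta_+=\tau'\phi'\,\eta_+$ at $p$; substituting these and invoking~\eqref{eq:condition_epsilon} at $p$ contradicts $\eta_+(p)>0$, so $w\le\varepsilon v$, and the argument on $\eta_-$ gives $-w\le\varepsilon v$. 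Finally, since~\eqref{eq:condition_epsilon} involves neither $K$ nor its scale, it holds verbatim for $\lambda K$, $\lambda>0$, which still satisfies $[X,\lambda K]=0$ and~\eqref{eq:asymptotics}; applying the previous step to $\lambda K$ gives $|\bar{g}(K,\nu)|\le(\varepsilon/\lambda)\,\bar{g}(X,\nu)$ on $\Sigma_0$, and letting $\lambda\to\infty$ forces $\bar{g}(K,\nu)\equiv 0$, i.e. $\Sigma_0$ is tangential to $K$.

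The step I expect to be the main obstacle is the interior-maximum computation together with its interaction with the end of $\Sigma_0$. The potential $A$ has no definite sign, so one must exploit the precise shape of~\eqref{eq:condition_epsilon}---in which $\dot{f}^{ij}g_{ij}\frac{\phi''f}{\phi\tau'}$ matches the $\frac{\phi''}{\phi}$-term of~\eqref{eq:box_X}, $f\phi'(1+\deg f)$ matches the homogeneity term, and $\dot{f}^{ij}(h^2)_{ij}+\dot{f}^{ij}\overline{\Rm}(e_i,\nu,\nu,e_j)$ matches $A$---to ensure that the zeroth-order coefficient of the barrier equation cooperates at an interior maximum; and because $\bar{g}(X,\nu)$ itself degenerates on an end asymptotic to $\mathcal{C}(\theta_0)$, the quotient $\bar{g}(K,\nu)/\bar{g}(X,\nu)$ need not decay, which forces one to keep the barrier in the unnormalized form $\pm\bar{g}(K,\nu)-\varepsilon\,\bar{g}(X,\nu)$ and to rely on positivity of $\bar{g}(X,\nu)$---and, in the situations of interest, on $c>0$, which follows from $\deg f>-1$, $\tau'>0$ and $\phi',\phi''\ge 0$---in order to prevent the extremum from escaping to infinity.
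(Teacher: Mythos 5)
Your overall skeleton --- deriving stationary elliptic equations for $\bar{g}(K,\nu)$ and $\bar{g}(X,\nu)$ from $[X,K]=0$ and the conformality of $\psi^X_t$, using the decay \eqref{eq:asymptotics} to trap an extremum in a compact slab $\Sigma_0\cap\{\rho\le\rho_0\}$, and applying the maximum principle --- matches the paper's. But the decisive device differs, and yours does not close. The paper works with the quotient $u=\bar{g}(K,\nu)/(\bar{g}(X,\nu)+\varepsilon)$: the shift by $\varepsilon$ in the denominator simultaneously forces $u$ to decay (handling exactly the degeneracy of $\bar{g}(X,\nu)$ at infinity that you correctly identify) and makes the numerator of $\dot{f}^{ij}\nabla_i\nabla_j u$ at a critical point equal to $P\cdot\bar{g}(K,\nu)$, where $P$ is precisely the difference of the two sides of \eqref{eq:condition_epsilon}; positivity of $P$ then gives an immediate sign contradiction at an interior positive maximum.

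Your linear barrier $\eta_+=\bar{g}(K,\nu)-\varepsilon\,\bar{g}(X,\nu)$ does not interact with \eqref{eq:condition_epsilon} in this way. Writing $w=\bar{g}(K,\nu)$, $v=\bar{g}(X,\nu)$, $A=\dot{f}^{ij}(h^2)_{ij}+\dot{f}^{ij}\overline{\Rm}(e_i,\nu,\nu,e_j)$ and $c=\dot{f}^{ij}g_{ij}\frac{\phi''}{\phi}+(1+\deg f)\tau'\phi'$, your computation at an interior maximum $p$ with $\eta_+(p)>0$ yields only $\varepsilon c v\le(A-\tau'\phi')\,\eta_+$, while \eqref{eq:condition_epsilon} reads $cv>\varepsilon(A-\tau'\phi')$. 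Combining the two gives $(A-\tau'\phi')(\eta_+-\varepsilon^2)>0$, which is not a contradiction: neither the sign of $A-\tau'\phi'$ nor that of $c$ is controlled by the hypotheses. Concretely, at a point where $A-\tau'\phi'=1$ and $cv=2\varepsilon$ the hypothesis \eqref{eq:condition_epsilon} holds, yet the maximum-principle inequality is satisfied by any $\eta_+(p)\ge 2\varepsilon^2$. So the step you flagged as the main obstacle is indeed where the argument breaks; the remedy is not to unnormalize the barrier but to normalize by $\bar{g}(X,\nu)+\varepsilon$, which restores both the decay of the test function and the exact cancellation that turns \eqref{eq:condition_epsilon} into a sign condition. (Your final rescaling $K\mapsto\lambda K$, $\lambda\to\infty$, is then unnecessary: the quotient argument gives $\bar{g}(K,\nu)\le 0$ and $\ge 0$ directly.)
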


\begin{proof}
Recall that the conformal Killing vector field $\displaystyle{X = \phi(\rho)\D{}{\rho}}$ induces a flow map $\psi_t$ such that
\[\D{}{t}\psi_t^*\bar{g} = \psi_t^*(\mathcal{L}_X\bar{g}) = (\psi_t)^*(2\phi'(\rho)\bar{g}) = 2(\phi' \circ \rho \circ \psi_t)\psi_t^*\bar{g},\]
so $\psi_t^*\bar{g}$ is conformally related to $\bar{g}$ by the relation:
\[\psi_t^*\bar{g} = \exp\left(2\int_0^t \phi' \circ \rho \circ \psi_s\,ds\right)\bar{g}.\]
Let $F_t$ be the parametrization of the self-similar solution $\Sigma_t$, i.e. $F_t = \psi_{\tau(t)} \circ F_0$ for some function $\tau(t)$ of $t$. Given any conformal Killing field $K$ such that $[X,K] = 0$, then as discussed in the proof of Theorem \ref{thm:symmetry}, we have
\begin{align*}
\bar{g}(K \circ F_t, \nu_t) & = \frac{(\psi_\tau^*\bar{g})(K \circ F_0, \nu_0)}{\sqrt{(\psi_\tau^*\bar{g})(\nu_0,\nu_0)}}\\
& = \exp\left(\int_0^{\tau(t)} \phi' \circ \rho \circ \psi_s\,ds\right)\bar{g}(K \circ F_0,\nu_0).
\end{align*}
From here, we have
\[\D{}{t}\bar{g}(K \circ F_t, \nu_t) = (\tau'\phi' \circ \rho \circ \psi_t)\bar{g}(K \circ F_0,\nu_0).\]
By a suitable tangential diffeomorphism $\Phi_t$, one can arrange that $\widetilde{F}_t := F_t \circ \Phi_t$ satisfies the flow
\[\D{\widetilde{F}}{t} = f\widetilde{\nu}.\]
Precisely, $\Phi_t$ satisfies the ODE:
\begin{align*}
\D{\Phi_t}{t} & = -\left(\D{F_t}{t}\right)^T \circ \Phi_t\\
\Phi_0 & = \id
\end{align*}
where $T$ denotes the tangential projection onto $\Sigma_t$. As $F_t$ evolves by $\psi_\tau \circ F_0$, we also have
\[\D{\Phi_t}{t} = -\tau' X^T \circ F_t\]
By the chain rule, we then have
\begin{align*}
\D{}{t}\bar{g}(K \circ \widetilde{F}_t, \widetilde{\nu}_t) & = \D{}{t} \bar{g}(K \circ F_t, \nu_t) \circ \Phi_t\\
& = \left(\D{}{t}\bar{g}(K \circ F_t, \nu_t)\right) \circ \Phi_t + \bar{g}\left(\nabla\big(\bar{g}(K\circ\widetilde{F}_t, \widetilde{\nu}_t)\big), \D{\Phi_t}{t}\right)\\
& = (\tau'\phi' \circ \rho \circ \psi_t \circ \Phi_t)\bar{g}(K \circ F_0, \nu_0) \circ \Phi_t - \bar{g}\left(\nabla\big(\bar{g}(K\circ\widetilde{F}_t, \widetilde{\nu}_t)\big), \tau' X^T \circ F_t\right).
\end{align*}
Evaluate at $t = 0$, we get
\[\D{}{t}\bigg|_{t=0}\bar{g}(K \circ F_t, \nu_t) = \tau'\phi'\bar{g}(K,\nu) - \tau'\nabla_{X^T}\bar{g}(K,\nu).\]
Hence, \eqref{eq:box_K} becomes
\begin{align*}
& \tau'\phi'\bar{g}(K,\nu) - \tau'\nabla_{X^T}\big(\bar{g}(K,\nu)\big) - \dot{f}^{ij}\nabla_i\nabla_j\bar{g}(K,\nu)\\
& =  -\dot{f}^{ij}\overline{\Rm}(e_i,\nu,e_j,\nu)\bar{g}(K,\nu) + \dot{f}^{ij} (h^2)_{ij}\bar{g}(K,\nu)
\end{align*}
and \eqref{eq:box_X} becomes
\begin{align*}
& \tau'\phi'\bar{g}(X,\nu) - \tau'\nabla_{X^T}\big(\bar{g}(K,\nu)\big) - \dot{f}^{ij}\nabla_i\nabla_j\bar{g}(X,\nu)\\
& = \dot{f}^{ij}g_{ij}\frac{\phi''}{\phi}\bar{g}(X,\nu)-\dot{f}^{ij}\overline{\Rm}(e_i,\nu,e_j,\nu)\bar{g}(X,\nu)\nonumber\\
& \hskip 0.5cm + (1 + \deg f)f\phi' + \dot{f}^{ij} (h^2)_{ij}\bar{g}(X,\nu)\\
& = \dot{f}^{ij}g_{ij}\frac{\phi''}{\phi}\bar{g}(X,\nu)-\dot{f}^{ij}\overline{\Rm}(e_i,\nu,e_j,\nu)\bar{g}(X,\nu)\nonumber\\
& \hskip 0.5cm + \tau'(1 + \deg f)\phi'\bar{g}(X,\nu) + \dot{f}^{ij} (h^2)_{ij}\bar{g}(X,\nu)
\end{align*}
These results motivate us to define the second-order elliptic operator:
\[L:= \dot{f}^{ij}\nabla_i\nabla_j + \tau'\nabla_{X^T}+ \dot{f}^{ij}(h^2)_{ij} - \dot{f}^{ij}\overline{\Rm}(e_i,\nu,e_j,\nu).\]
Then, \eqref{eq:box_K} and \eqref{eq:box_X} can be written as
\begin{align}
\label{eq:L_K} L\big(\bar{g}(K,\nu)\big) & = \tau'\phi'\bar{g}(K,\nu)\\
\label{eq:L_X} L\big(\bar{g}(X,\nu)\big) & = -\dot{f}^{ij}g_{ij}\frac{\phi''}{\phi}\bar{g}(X,\nu) - \tau'\phi'(\deg f)\bar{g}(X,\nu).
\end{align}
By straight-forward computation, for any scalar functions $\varphi_1$, and $\varphi_2 \not= 0$, we have
\begin{align}
\label{eq:Hessian_term_noncompact}
& \dot{f}^{ij}\nabla_i\nabla_j	\left(\frac{\varphi_1}{\varphi_2}\right)\\
& = \frac{\varphi_2 \dot{f}^{ij}\nabla_i\nabla_j\varphi_1 - \varphi_1\dot{f}^{ij}\nabla_i\nabla_j\varphi_2}{\varphi_2^2} - \frac{2}{\varphi_2}\dot{f}^{ij}\nabla_i\left(\frac{\varphi_1}{\varphi_2}\right)\nabla_j\varphi_2 \nonumber\\
& = \frac{1}{\varphi_2^2}\left\{\varphi_2\big(L\varphi_1 -\tau'\nabla_{X^T}\varphi_1 - \dot{f}^{ij}(h^2)_{ij}\varphi_1 + \dot{f}^{ij}\overline{\Rm}(e_i,\nu,e_i,\nu)\varphi_1\big)\right. \nonumber\\
& \hskip 1.5cm \left. -\varphi_1\big(L\varphi_2 -\tau'\nabla_{X^T}\varphi_2 - \dot{f}^{ij}(h^2)_{ij}\varphi_2 + \dot{f}^{ij}\overline{\Rm}(e_i,\nu,e_j,\nu)\varphi_2\big)\right\} \nonumber\\
& \hskip 0.5cm - \frac{2}{\varphi_2}\dot{f}^{ij}\nabla_i\left(\frac{\varphi_1}{\varphi_2}\right)\nabla_j\varphi_2 \nonumber\\
& = \frac{\varphi_2 L\varphi_1 - \varphi_1 L\varphi_2}{\varphi_2^2} - \tau'\nabla_{X^T}\left(\frac{\varphi_1}{\varphi_2}\right) - \frac{2}{\varphi_2^2}\dot{f}^{ij}\nabla_i\left(\frac{\varphi_1}{\varphi_2}\right)\nabla_j\varphi_2 \nonumber
\end{align}
Next we apply the above formula with $\varphi_1 = \bar{g}(K,\nu)$ and $\varphi_2 = \bar{g}(X,\nu) + \varepsilon$. Note that $\Sigma_0$ is star-shaped, so $\varphi_2 > \varepsilon > 0$. Consider the term
\begin{align}
\label{eq:numerator}
& \varphi_2 L\varphi_1 - \varphi_1 L\varphi_2\\
& = \big(\bar{g}(X,\nu)+\varepsilon\big)L\big(\bar{g}(K,\nu)\big)-\bar{g}(K,\nu)L\big(\bar{g}(X,\nu)+\varepsilon\big) \nonumber\\
& = \Big(\bar{g}(X,\nu)L\big(\bar{g}(K,\nu)\big)-\bar{g}(K,\nu)L\big(\bar{g}(K,\nu)\big)\Big) - \varepsilon\Big(\bar{g}(K,\nu)L(1) - L\big(\bar{g}(K,\nu)\big)\Big) \nonumber\\
& = \left(\tau'\phi'(1+\deg f) + \dot{f}^{ij}g_{ij}\frac{\phi''}{\phi}\right)\bar{g}(K,\nu)\bar{g}(X,\nu) \nonumber\\
& \hskip 1.5cm -\varepsilon\Big(\dot{f}^{ij}(h^2)_{ij} + \dot{f}^{ij}\overline{\Rm}(e_i,\nu,\nu,e_j) - \tau'\phi'\Big)\bar{g}(K,\nu) \nonumber\\
& = \underbrace{\left\{\left(f\phi'(1+\deg f) + \dot{f}^{ij}g_{ij}\frac{\phi''f}{\phi\tau'}\right)-\varepsilon\Big(\dot{f}^{ij}(h^2)_{ij} + \dot{f}^{ij}\overline{\Rm}(e_i,\nu,\nu,e_j) - \tau'\phi'\Big)\right\}}_{=: P(\Sigma_0, \phi, f, \tau, \varepsilon)}\bar{g}(K,\nu). \nonumber
\end{align}
By the given condition \eqref{eq:condition_epsilon}, the term $P$ defined above is positive. Now we consider the quotient
\[u := \frac{\varphi_1}{\varphi_2} = \frac{\bar{g}(K, \nu)}{\bar{g}(X, \nu) + \varepsilon}.\]
By the star-shaped property, we have $\bar{g}(X,\nu) + \varepsilon > \varepsilon > 0$. Combine with the asymptotic condition \eqref{eq:asymptotics} on $\bar{g}(K,\nu)$, we have
\begin{equation}
\label{eq:decay}
\lim_{\rho_0 \to +\infty}\sup_{\Sigma_0 \cap \{\rho > \rho_0\}}\abs{u} = 0.	
\end{equation}
We want to argue that $u \equiv 0$ on $\Sigma_0$. Suppose not, there exists $p \in \Sigma_0$ such that $u(p) \not= 0$. Without loss of generality, we assume $u(p) > 0$ (the case $u(p) < 0$ is similar). From \eqref{eq:decay}, there exists a sufficiently large $\rho_0 > 0$ such that
\[u(q) < \frac{1}{2}u(p), \;\; \forall q \in \Sigma_0 \cap \{\rho > \rho_0\}.\]
In particular, $p \in \Sigma_0 \cap \{\rho \leq \rho_0\}$. Since is $\Sigma_0 \cap \{\rho \leq \rho_0\}$ is compact, the above condition implies that $u$ must have an interior maximum. However, \eqref{eq:Hessian_term_noncompact} and \eqref{eq:numerator} would imply at this local maximum point $p' \in \Sigma_0 \cap \{\rho \leq \rho_0\}$, we have
\begin{align*}
0 & \geq \dot{f}^{ij}\nabla_i\nabla_j u = \underbrace{P(\Sigma_0,\phi,f,\tau)}_{>0}\underbrace{\frac{u(p')}{\varphi_2}}_{>0}  \underbrace{- \tau'\nabla_{X^T}u - \frac{2}{\varphi_2^2}\dot{f}^{ij}(\nabla_i u)(\nabla_j\varphi_2)}_{=0} > 0,
\end{align*}
leading to a contradiction. This proves $u \leq 0$. The proof of $u \geq 0$ is similar. It concludes that $\bar{g}(K,\nu) = 0$, hence $K$ is tangential to $\Sigma_0$.
\end{proof}

There are some good examples that the condition \eqref{eq:condition_epsilon} holds. For instance, when $N$ is the Euclidean space $\R^{n+1}$, i.e. $\phi(\rho) = \rho$, $f = H$, $\tau' > 0$, and that $\Sigma_0$ is asymptotic to a cone. A star-shaped self-similar solution $\Sigma_0$ under this setting is a MCF self-expander with positive mean curvature. In this case, we have
\[P = 2H - \varepsilon(\abs{A}^2 - \tau')\]
The asymptotically conical assumption implies $\abs{A} \to 0$ at infinity, we have $\abs{A}^2 < \tau'$ on $\Sigma_0 \cap \{\rho > \rho_0\}$ for some $\rho_0$, so that $P > 0$ on $\Sigma_0 \cap \{\rho > \rho_0\}$ for any $\varepsilon > 0$. Then, by the compactness of $\Sigma_0 \cap \{\rho \leq \rho_0\}$ and the positivity of $H$, one can choose
\[\varepsilon = \frac{\inf_{\Sigma_0 \cap \{\rho \leq \rho_0\}}H}{\sup_{\Sigma_0 \cap \{\rho \leq \rho_0\}}(\abs{A}^2-\tau')} > 0,\]
then we have $P > 0$ on $\Sigma_0 \cap \{\rho \leq \rho_0\}$ as well. Then, applying the proposition with an arbitrary Killing field $K$ of $\mathbb{S}^{n-1}$ in $\R^{n+1} = [0,\infty) \times [0,\pi] \times \mathbb{S}^{n-1}$ (cylindrical coordinates), one can prove that $\Sigma_0$ is rotationally symmetric. It recovers the result proved in \cite{FM19}.

Next we apply Proposition \ref{prop:non-compact} to prove some new symmetry results about expanding self-similar solutions in hyperbolic and anti-deSitter-Schwarzschilds space. These spaces can be expressed as a doubly-warped product as
\[N^{n+1} = [s_0,\infty) \times [0,\pi] \times \mathbb{S}^{n-1}\]
with the metric
\[\frac{1}{1 - ms^{1-n} + s^2}ds^2 + s^2 (d\theta^2 + (\sin^2\theta)g_{\mathbb{S}^{n-1}})\]
for some $m \geq 0$. It can be rewritten in the warped product form as $[0,\infty) \times [0,\pi] \times \mathbb{S}^{n-1}$ with metric
\[d\rho^2 + \phi(\rho)^2(d\theta^2 + (\sin^2\theta)g_{\mathbb{S}^{n-1}})\]
so that $\phi(\rho) = s$, $\phi'(\rho) = \sqrt{1 - ms^{1-n} + s^2}$, and $\phi''(\rho) = \frac{m(n-1)}{2}s^{-n}+s$. Note that when $m = 0$, the space is the hyperbolic space $\mathbb{H}^{n+1}$.

As an application of Proposition \ref{prop:non-compact}, we prove the following result for flows by positive powers of mean curvature. We expect similar results should hold, with some slight modifications, to other flows by homogeneous speed functions of principal curvatures with positive degrees.
\begin{theorem}
\label{thm:AC}
Suppose $\Sigma_t^n$ is a complete non-compact star-sharped expanding self-similar solution to the $\alpha$-mean curvature flow $\D{F}{t} = H^\alpha\nu$, where $\alpha > 0$, on the anti-deSitter-Schwarzschild's space (including hyperbolic space) which is $C^2$-asymptotic to a cone $\mathcal{C}(\theta_0)$, then $\Sigma_0$ must be rotationally symmetric.
\end{theorem}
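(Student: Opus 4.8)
The plan is to apply Proposition~\ref{prop:non-compact} with speed function $f = H^\alpha$ and with $K$ ranging over the lifts $\widetilde{K}$ to $N^{n+1} = [0,\infty)\times[0,\pi]\times\mathbb{S}^{n-1}$ of the Killing vector fields of $(\mathbb{S}^{n-1},g_{\mathbb{S}^{n-1}})$. As in the proof of Theorem~\ref{thm:symmetry}, the doubly-warped structure
\[\bar{g} = d\rho^2 + \phi(\rho)^2\,d\theta^2 + \phi(\rho)^2(\sin^2\theta)\,g_{\mathbb{S}^{n-1}}\]
makes each such $\widetilde{K}$ a Killing field of $N^{n+1}$ with $[X,\widetilde{K}] = 0$, and $f = H^\alpha$ satisfies conditions (i)--(iii) on the cone $\{H>0\}$, with $\deg f = \alpha$ and $\dot{f}^{ij} = \alpha H^{\alpha-1}g^{ij}$. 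So the whole task reduces to checking the two hypotheses of Proposition~\ref{prop:non-compact}: the decay \eqref{eq:asymptotics} of $\bar{g}(\widetilde{K},\nu)$ along the end of $\Sigma_0$, and the inequality \eqref{eq:condition_epsilon} for a single $\varepsilon>0$.

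For \eqref{eq:asymptotics}: the exact cone $\mathcal{C}(\theta_0)$ has unit normal $\nu_{\mathcal{C}} = \phi(\rho)^{-1}\D{}{\theta}$, which is $\bar{g}$-orthogonal to every $\widetilde{K}$, so $\bar{g}(\widetilde{K},\nu_{\mathcal{C}})\equiv 0$. Writing the end of $\Sigma_0$ as a graph $\theta = \theta_0 + u(\rho,w)$ over $\mathcal{C}(\theta_0)$ with defining function $G = \theta-\theta_0-u$, I would compute $\abs{\bar{g}(\widetilde{K},\nu)} = \abs{\bar\nabla G}^{-1}\abs{dG(\widetilde{K})} \le C\,\phi(\rho)\,\abs{\nabla_{\mathbb{S}^{n-1}}u}$, using $\abs{\bar\nabla G}\ge\phi(\rho)^{-1}$ and the boundedness of $\abs{\widetilde{K}}$ on $\mathbb{S}^{n-1}$. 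Interpreting ``$C^2$-asymptotic to $\mathcal{C}(\theta_0)$'' in the natural $\phi$-weighted norms, so that the second fundamental form of $\Sigma_0$ converges to that of $\mathcal{C}(\theta_0)$ and in particular $\phi\,\abs{\nabla_{\mathbb{S}^{n-1}}u}\to 0$ as $\rho\to\infty$, then yields \eqref{eq:asymptotics}; the same hypothesis forces the principal curvatures of $\Sigma_0$ to converge to those of $\mathcal{C}(\theta_0)$ (which are $0$ and $\tfrac{\cos\theta_0}{\phi(\rho)\sin\theta_0}$), so $\abs{A}\to 0$ along the end.

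For \eqref{eq:condition_epsilon}: with $f = H^\alpha$ one has $\dot{f}^{ij}g_{ij} = n\alpha H^{\alpha-1}$ and $\dot{f}^{ij}(h^2)_{ij} = \alpha H^{\alpha-1}\abs{A}^2$, so its left-hand side equals
\[(1+\alpha)H^\alpha\phi' + \frac{n\alpha}{\tau'}\,\frac{\phi''}{\phi}\,H^{2\alpha-1},\]
which is strictly positive everywhere on $\Sigma_0$, since $H>0$, $\phi>0$, $\tau'>0$, $\phi'\ge 0$, and $\phi'' = \tfrac{m(n-1)}{2}s^{-n}+s>0$ throughout the anti-deSitter-Schwarzschild space. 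The right-hand bracket $\alpha H^{\alpha-1}\abs{A}^2 + \dot{f}^{ij}\overline{\Rm}(e_i,\nu,\nu,e_j) - \tau'\phi'$ instead tends to $-\infty$ along the end: $\abs{A}\to 0$ makes $H\to 0$ and $\alpha H^{\alpha-1}\abs{A}^2\to 0$; the curvature term is bounded above (the ambient is asymptotically hyperbolic with bounded geometry, with $\dot{f}^{ij}\overline{\Rm}(e_i,\nu,\nu,e_j)\to -n\alpha H^{\alpha-1}\le 0$); and $\tau'\phi'\to+\infty$ since $\phi'(\rho) = \sqrt{1-ms^{1-n}+s^2}\to\infty$. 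Hence the bracket is $<0$ on $\Sigma_0\cap\{\rho>\rho_1\}$ for some $\rho_1$, so \eqref{eq:condition_epsilon} holds there for every $\varepsilon>0$. On the compact core $\Sigma_0\cap\{\rho\le\rho_1\}$, the displayed left-hand side has a positive lower bound $c_0$ (here $H$ is bounded away from $0$, which controls $H^{\alpha-1}$ when $\alpha<1$), while the bracket is bounded by some $C_0$, so $\varepsilon := c_0/(2C_0+1)$ makes \eqref{eq:condition_epsilon} hold on the core as well. With this single $\varepsilon>0$, Proposition~\ref{prop:non-compact} gives that $\Sigma_0$ is tangential to every $\widetilde{K}$; since the Killing fields of $\mathbb{S}^{n-1}$ span its tangent space pointwise, $T\Sigma_0$ contains the entire $\mathbb{S}^{n-1}$-factor of $TN^{n+1}$ at each point, i.e. $\Sigma_0$ is invariant under the induced $SO(n)$-action, which is exactly rotational symmetry.

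The hard part will be the verification of \eqref{eq:asymptotics} and \eqref{eq:condition_epsilon} --- specifically, making rigorous the claim that ``$C^2$-asymptotic to $\mathcal{C}(\theta_0)$'' controls $\bar{g}(\widetilde{K},\nu)$ and $\abs{A}$ with the correct $\phi$-weighted decay, which requires setting up and closing the graph equation over the cone in the doubly-warped metric. A secondary technical point is the estimate of the ambient curvature term $\dot{f}^{ij}\overline{\Rm}(e_i,\nu,\nu,e_j)$ away from the exact space-form case; there one only needs it to be too small to overcome the divergence of $\tau'\phi'$, which follows from the bounded geometry of the anti-deSitter-Schwarzschild metric.
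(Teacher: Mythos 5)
Your proposal is correct and follows essentially the same route as the paper: apply Proposition \ref{prop:non-compact} to the lifted Killing fields of $\mathbb{S}^{n-1}$, use the conical asymptotics to get $\abs{A}\to 0$ and hence the negativity of the bracket on the end together with positivity of the $\phi'$, $\phi''$ terms, and then choose $\varepsilon$ by compactness of the core. The only difference is that you spell out the graph-over-the-cone verification of \eqref{eq:asymptotics}, which the paper leaves implicit, and you group the end estimate slightly differently (the paper factors $\alpha H^{\alpha-1}(\abs{A}^2 - n + o(1))$ so as not to rely on $\alpha H^{\alpha-1}\abs{A}^2\to 0$ when $\alpha<1$), but these are refinements rather than a different argument.
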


\begin{proof}
Consider that $f = H^\alpha$, where $\alpha > 0$, we have
\begin{align*}
\dot{f}^{ij} & = \alpha H^{\alpha-1}g^{ij},\\
\dot{f}^{ij}(h^2)_{ij} & = \alpha H^{\alpha-1}\abs{A}^2.
\end{align*}
Also, on the anti-deSitter-Schwarzschild space, the Riemann curvature tensor of $\bar{g}$ satisfies the asymptotics:
\[\overline{\Rm}(e_i, \nu, \nu, e_j) = -\left(\bar{g}(e_i,e_j)\bar{g}(\nu,\nu) - \bar{g}(e_i,\nu)\bar{g}(e_j,\nu)\right) + o(1) = -g_{ij} + o(1)\]
as $\rho \to +\infty$. Combining these, one can check that
\begin{align*}
& P(\Sigma_0, \phi, f, \tau, \varepsilon)\\
& = \left(f\phi'(1+\deg f) + \dot{f}^{ij}g_{ij}\frac{\phi''f}{\phi\tau'}\right)-\varepsilon\Big(\dot{f}^{ij}(h^2)_{ij} + \dot{f}^{ij}\overline{\Rm}(e_i,\nu,\nu,e_j) - \tau'\phi'\Big)\\
& = \left((1+\alpha)H^\alpha \phi' + n\alpha H^{2\alpha-1}\frac{\phi''}{\phi'\tau'}\right)-\varepsilon\left(\alpha H^{\alpha-1}\abs{A}^2 + \alpha H^{\alpha-1}(-n + o(1)) - \tau'\phi'\right)
\end{align*}
If $\Sigma_0$ is asymptotic to a cone $\mathcal{C}(\theta_0) = \{(\rho, \theta_0, w) : \rho \in [0,\infty), w \in \mathbb{S}^{n-1}\}$ in $C^2$-sense, then the principal curvatures of $\Sigma_0$ are asymptotic to those of $\mathcal{C}(\theta_0)$, which are
\[\left\{0, \frac{\cot\theta_0}{\phi(\rho)}, \cdots, \frac{\cot\theta_0}{\phi(\rho)}\right\}.\]
As $\phi(\rho) \to \infty$ as $\rho \to \infty$ for any anti-deSitter-Schwarzschild's space, one has
\[\lim_{\rho_0 \to \infty} \sup_{\Sigma_0 \cap \{\rho > \rho_0\}}\abs{A}^2 = 0.\]
Therefore, there exists $\rho_0 > 0$ such that
\[\abs{A}^2 < \frac{n}{2} \qquad \text{on $\Sigma_0 \cap \{\rho > \rho_0\}$}.\]
For expanding self-similar solution $\Sigma_0$ (i.e $\tau' > 0$) which is star-shaped, then we have $H^\alpha = \tau'\bar{g}(X,\nu) > 0$. Since $\phi(\rho), \phi'(\rho), \phi''(\rho) > 0$ for anti-deSitter-Schwarzschild space, one has
\begin{align*}
(1+\alpha)H^\alpha \phi' + n\alpha H^{2\alpha-1}\frac{\phi''}{\phi'\tau'} & > 0\\
\alpha H^{\alpha-1}\abs{A}^2 + \alpha H^{\alpha-1}(-n + o(1)) - \tau'\phi' & < 0
\end{align*}
on $\Sigma_0 \cap \{\rho > \rho_0\}$. This shows $P(\Sigma_0, \phi, f, \tau, \varepsilon) > 0$ on $\Sigma_0 \cap \{\rho > \rho_0\}$ for any $\varepsilon > 0$. Since $\Sigma_0 \cap \{\rho \leq \rho_0\}$ is compact, one can choose a sufficiently small $\varepsilon > 0$ such that $P(\Sigma_0, \phi, f, \tau, \varepsilon) > 0$ on $\Sigma_0 \cap \{\rho \leq \rho_0\}$. Therefore, the condition \eqref{eq:condition_epsilon} holds, and we can applying Proposition \ref{prop:non-compact}.

Now consider any Killing field $K$ of $\mathbb{S}^{n-1}$. By the same argument at the end of the proof of Theorem \ref{thm:symmetry}, $K$ lifts to a Killing field $\widetilde{K}$ of $N^{n+1} = [0, \infty) \times [0,\pi] \times \mathbb{S}^{n-1}$, and that $[X,\widetilde{K}] = 0$. By the asymptotically concial condition on $\Sigma_0$, condition \eqref{eq:decay} as holds. Therefore, Proposition \ref{prop:non-compact} then shows $\widetilde{K}$ is tangential to $\Sigma_0$. Note that the Killing field acts transitively on $\mathbb{S}^{n-1}$, hence for any $(\bar\rho,\bar\theta,\bar{w}) \in \Sigma_0$, and any $w' \not= \bar{w}$ in $\mathbb{S}^{n-1}$, there exists a Killing field $K$ of $\mathbb{S}^{n-1}$ such that $\psi_t^K(\bar{w}) = w'$ for some $t$. Since $K$ lifts up to a Killing field $\widetilde{K}$ in $N^{n+1}$ which is tangential to $\Sigma_0$, one has $w' = \psi_t^{\widetilde{K}}(\bar{w}) \in \Sigma_0$. In other words, whenever $\Sigma_0$ contains $(\bar\rho, \bar\theta, \bar{w})$, it must contain $\{(\bar\rho, \bar\theta)\} \times \mathbb{S}^{n-1}$. Hence, $\Sigma_0$ is rotationally symmetric.
\end{proof}

\textbf{Acknowledgement:} The author and this research project is partially supported by the General Research Fund \#16304220 by the Hong Kong Research Grants Council.

\bibliographystyle{amsplain}
\bibliography{citations.bib}

\end{document}